\newcommand{\ds}{\displaystyle}
\newcommand{\f}{\frac}
\newcommand{\om}{\Omega}
\newcommand{\p}{\partial}
\newtheorem{theorem}{Theorem}[section]
\newtheorem{lemma}[theorem]{Lemma}
\begin{document}
\begin{frontmatter}
\linenumbers

\title{Variable-order fractional wave equation: Analysis, numerical approximation, and fast algorithm}
\author[SDNU]{Jinhong Jia}
\author[SDNU]{Chuanting Jiang}

\author[WHU]{Yiqun Li\corref{mycorrespondingauthor}}
\ead{YiqunLi24@outlook.com}

\cortext[mycorrespondingauthor]{Corresponding author.}

\author[SDU]{Mengmeng Liu}

\author[SDU]{Wenlin Qiu}

\address[SDNU]{School of Mathematics and Statistics, Shandong Normal University, Shandong 250358, China}
\address[WHU]{School of Mathematics and Statistics, Wuhan University, Wuhan 430072, China}
\address[SDU]{School of Mathematics, Shandong University, Jinan 250100, China}

\begin{abstract}
We investigate a local modification of a variable-order fractional wave equation, which describes the propagation of diffusive wave in viscoelastic media
with evolving physical property.
We incorporate an equivalent formulation to prove the well-posedness of the model as well as its high order regularity estimates.
To accommodate the convolution term in the reformulated model, we  adopt the Ritz-Volterra finite element projection and  then derive the rigorous error estimate  for the fully-discretized finite element scheme. To circumvent
the high  computational cost from the temporal integral term, we exploit  the translational invariance of the discrete coefficients associated with the convolution structure and  construct a  fast divide-and-conquer algorithm which reduces the computational complexity from $O(MN^2)$ to $O(MN\log^2 N)$.
Numerical experiments are provided to verify the theoretical results and to demonstrate the accuracy and efficiency of the proposed method.
\end{abstract}

\begin{keyword}
variable-order; fractional wave equation; well-posedness; Ritz-Volterra finite element projection; fast divide-and-conquer algorithm
\end{keyword}


\pagestyle{myheadings}
\thispagestyle{plain}

\markboth{}{The variable-order fractional wave equation}

\end{frontmatter}

\section{Introduction}



In this work, we investigate a variable-order time-fractional wave equation, which describes the propagation of the diffusive wave in viscoelastic media with evolving physical property  \cite{DuSun,LiWanJia,QiuZhe,Hendy22}
\begin{equation}\label{Model0}
\begin{array}{c}
\left\{\begin{array}{l}
\ds \partial_t^2 u(\bm x,t) -K{}^R\partial_t^{\alpha(t)} \Delta u(\bm x,t) = f(\bm x,t), \quad (\bm x,t)\in \Omega\times (0,T];\\[0.05in]
\ds u(\bm x,0)=u_0(\bm x), \quad \partial_t u(\bm x,0)=\hat u_0(\bm x), \quad \bm x\in\Omega;\\[0.05in]
\ds u(\bm x,t)=0,\quad (\bm x,t)\in \partial\Omega\times [0,T],
\end{array}
\right.
\end{array}
\end{equation}
Here $\Omega\subset \mathbb R^d$ ($1\leq d \leq 3$) is a simply connected bounded domain with a piecewise smooth boundary $\partial\Omega$ and convex corners, $K$ represents the velocity of propagation in the medium, and the source term $f(\bm x,t)$, as well as the initial data $u_0(\bm x)$ and $\hat u_0(\bm x)$, are prescribed functions. The operator ${}^R\partial_t^{\alpha(t)}$ with $0< \alpha(t)<1$ denotes the variable-order Riemann-Liouville fractional derivative, which is defined as follows \cite{LorHar,Pod}
\begin{align*}
    {}^R\partial_t^{\alpha(t)} u &= \partial_t I_t^{1-\alpha(t)}u, \quad
    I_t^{\alpha(t)}u := \bigg( \frac{t^{\alpha(t)-1}}{\Gamma(\alpha(t))} \bigg)* u(\bm x,t).
\end{align*}

Due to the strong capability to model complex phenomena—such as wave propagation in viscoelastic media, acoustic attenuation in heterogeneous materials, and seismic wave behavior, extensive research has been carried out  on time-fractional wave equations, including both the constant-order \cite{HuCai,JinLaz,LiLuo,Luchko,LyuVon,Mainardi,MaiSpa,Mcl,McLean,Xu,YuaXie}  and space/time-dependent variable-order cases \cite{LiuFu,OdzMal1,OdzMal2,Hendy22,ZhaoSun,ZheCNSNS,Zheng,ZhuaLiu}. Motivated by \cite{ZheLiQiu}, the work \cite{LiWanJia}  proposes the local modification of a variable-order fractional wave equation \eqref{Model0}, which eliminates its nonphysical initial singularity and indeed is a multiscale wave model. Nevertheless, high-order regularity estimates and rigorous numerical analysis for this model remain untreated in the literature, which motivates this work.

In this work, we incorporate  an equivalent formulation of \eqref{Model0} to prove the well-posedness of the model and the high-order regularity estimates of the solutions. In terms of the numerical approximation, we adopt the Ritz-Volterra finite element projection \cite{CanLin,CheThoWah,LinThoWal,Zhang} to accommodate the convolution term in the reformulated model \eqref{Model}. In particular, we combine the estimates \eqref{k1} for the kernel function to prove the bound for the second-order temporal derivative of the error in the Ritz-Volterra projection, based on which the rigorous error estimates of the numerical scheme is derived. To circumvent
the high computational cost from the temporal integral term  \cite{FanSunWan,JiaWan}, we exploit the translational invariance of the discrete coefficients associated with the convolution structure and construct an efficient fast divide-and-conquer algorithm which reduces the computational complexity from $O(MN^2)$ to $O(MN\log^2 N)$. Numerical results are carried out to substantiate the theoretical findings and to demonstrate the efficiency of the proposed fast algorithm.
The remainder of this paper is organized as follows. In Section \ref{sec2}, we present some preliminaries and provide the theoretical analysis of the mathematical model.
Section \ref{sec3} establishes optimal error estimates for the backward Euler Ritz-Volterra finite element method and introduces the corresponding fast algorithm.
Numerical experiments that verify the correctness of the theoretical analysis and the efficiency of the proposed fast algorithm are provided in Section \ref{secnum}.

\section{Mathematical analysis}\label{sec2}
\subsection{Preliminaries}
Let $L^p(\om)$ with $1 \le p \le \infty$ be the Banach space of $p$th power Lebesgue integrable functions on $\om$. For a positive integer $m$,
let  $ W^{m, p}(\Omega)$ be the Sobolev space of $L^p$ functions with $m$th weak derivatives in $L^p(\om)$ (similarly defined with $\om$ replaced by an interval $\mathcal I$). Let  $H^m(\Omega) := W^{m,2}(\Omega)$ and $H^m_0(\Omega)$ be its subspace with the zero boundary condition up to order $m-1$. 
For a Banach space $\mathcal{X}$, let $W^{m, p}(0,T; \mathcal{X})$ be the space of functions in $W^{m, p}(0,T)$ with respect to $\|\cdot\|_{\mathcal {X}}$. All spaces are equipped with standard norms \cite{AdaFou,Eva}.

 Denote the eigenpairs of the operator $\mathcal L:=-K\Delta$ with Dirichlet boundary conditions by $\{\lambda_i^2,\phi_i\}_{i=1}^{\infty}$ in which $\{\phi_i\}_{i=1}^{\infty}$ form an orthonormal basis in $L^2(\om)$ and the eigenvalues $\{\lambda_i^2\}_{i=1}^\infty$ form a positive and  non-decreasing sequence \cite{Eva}.
  We introduce the Sobolev space $\check{H}^s(\Omega)$ for $s\geq 0$ by
$$ \check{H}^{s}(\Omega) := \big \{ q \in L^2(\Omega): \| q \|_{\check{H}^s(\Omega)}^2 : = \sum_{i=1}^{\infty} \lambda_i^{2s} (q,\phi_i)^2 < \infty \big \},$$
which is a subspace of $H^s(\Omega)$ satisfying $\check{H}^0(\Omega) = L^2(\Omega)$ and $\check{H}^2(\Omega) = H^2(\Omega) \cap H^1_0(\Omega)$ \cite{Tho}.  We use $Q$ and $Q_*$ to denote generic positive constants in which $Q$ may assume different values at different occurances.

Throughout this paper, we make the {\it Assumption} A:
$0\le \alpha(t)\le \alpha^*<1$, $|\alpha'(t)|,\,|\alpha''(t)|\le Q$ on $ [0, T]$. In addition, $\alpha(0)=\alpha'(0)=0$.

\begin{lemma}\cite{ZheLiQiu}\label{Lem:Model}
If $u$ solves \eqref{Model0}, then $u$ solves the system
\begin{equation}\label{Model}
\begin{array}{c}
 \left\{\begin{array}{l}
\p_t^2 u(\bm x,t) -K\Delta u(\bm x,t) = f(\bm x,t) + K k*\Delta u(\bm x,t), \quad (\bm x,t)\in \Omega\times (0,T];\\[0.05in]
          \ds u(\bm x,0)=u_0(\bm x),~~ \p_t u(\bm x,0)=\hat u_0(\bm x)~~\bm x\in\Omega;\\[0.05in]
          \ds u(\bm x,t)=0,\quad (\bm x,t)\in \partial\Omega\times [0,T],
     \end{array}
\right.
\end{array}
\end{equation}
 where
\begin{equation}\label{k}
\ds k(t):=\frac{d}{d t}\left(\frac{t^{-\alpha(t)}}{\Gamma(1-\alpha(t))}\right)
\end{equation}
and the following estimates hold for \eqref{k}:
\begin{equation}\label{k1}
\ds |k(t)|\le Q,\quad |k'(t)|\le Q(|\ln t|+1).
\end{equation}
\end{lemma}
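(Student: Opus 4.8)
The plan is to establish the two assertions in turn: first the equivalence \eqref{Model0}$\Rightarrow$\eqref{Model}, which I would reduce to a single convolution-differentiation identity, and then the kernel bounds \eqref{k1}, which I would obtain by logarithmic differentiation combined with the smallness of $\alpha$ near $t=0$ built into Assumption A.

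For the equivalence I would set $g(t):=t^{-\alpha(t)}/\Gamma(1-\alpha(t))$, so that $I_t^{1-\alpha(t)}u=g*u$ and ${}^R\partial_t^{\alpha(t)}u=\partial_t(g*u)$. Since $\alpha(0)=\alpha'(0)=0$ with $\alpha''$ bounded, Taylor expansion gives $\alpha(t)=O(t^2)$ and $\alpha'(t)=O(t)$ near the origin; hence $-\alpha(t)\ln t=O(t^2|\ln t|)\to0$, so $t^{-\alpha(t)}\to1$ and $g(0^+)=1/\Gamma(1)=1$. Granting that $g'=k$ is bounded (proved below), the Leibniz rule for convolutions yields
\begin{equation*}
{}^R\partial_t^{\alpha(t)}v=\partial_t(g*v)=g(0^+)\,v+(g'*v)=v+k*v,
\end{equation*}
with $k$ as in \eqref{k}. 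Taking $v=\Delta u$ and inserting this into the first equation of \eqref{Model0} would give $\partial_t^2u-K\Delta u-Kk*\Delta u=f$, i.e.\ \eqref{Model}, the initial and boundary data being carried over unchanged.

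To prove \eqref{k1} I would differentiate $g$ logarithmically, writing $k(t)=g(t)h(t)$ with
\begin{equation*}
h(t):=-\alpha'(t)\ln t-\frac{\alpha(t)}{t}+\alpha'(t)\,\psi\bigl(1-\alpha(t)\bigr),\qquad \psi:=\Gamma'/\Gamma.
\end{equation*}
On $(0,T]$ the prefactor $g$ is bounded (by the same estimate as $g(0^+)$) and $\psi(1-\alpha(t))$ is bounded because $1-\alpha(t)\in[1-\alpha^*,1]$ keeps its argument away from the pole of $\psi$ at $0$. Using $\alpha'(t)\ln t=O(t|\ln t|)$, $\alpha(t)/t=O(t)$ and $|\alpha'|\le Q$, each term of $h$ is bounded, giving $|k(t)|\le Q$. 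For the second bound I would write $k'=k\,h+g\,h'$ and inspect
\begin{equation*}
h'(t)=-\alpha''(t)\ln t-\frac{\alpha'(t)}{t}-\frac{d}{dt}\Bigl(\frac{\alpha(t)}{t}\Bigr)+\frac{d}{dt}\Bigl(\alpha'(t)\psi(1-\alpha(t))\Bigr);
\end{equation*}
here $\alpha'(t)/t$ and $\alpha(t)/t^2$ are bounded via $\alpha'(t)=O(t)$, $\alpha(t)=O(t^2)$, the final bracket is bounded since $\psi,\psi'$ stay bounded on $[1-\alpha^*,1]$, and the only unbounded contribution is $-\alpha''(t)\ln t=O(|\ln t|)$, whence $|k'(t)|\le Q(|\ln t|+1)$.

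The main obstacle will be the behavior at $t=0$, where every singular factor ($\ln t$, $\alpha/t$, $\alpha/t^2$, $\alpha'/t$) must be controlled; this rests entirely on the vanishing conditions $\alpha(0)=\alpha'(0)=0$, which, together with the bounds on $\alpha',\alpha''$, upgrade to the Taylor estimates $\alpha(t)=O(t^2)$ and $\alpha'(t)=O(t)$. Once these are secured, the single genuinely logarithmic term $\alpha''\ln t$ surviving in $h'$ accounts for the $|\ln t|$ loss in the second estimate of \eqref{k1}, all remaining terms staying uniformly bounded.
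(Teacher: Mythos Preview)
The paper does not supply its own proof of this lemma: it is quoted with the citation \cite{ZheLiQiu} and used as a black box, so there is nothing in the present paper to compare against. Your argument is correct and is essentially the canonical one: writing $g(t)=t^{-\alpha(t)}/\Gamma(1-\alpha(t))$, using $\alpha(0)=\alpha'(0)=0$ to get $g(0^+)=1$ and the Leibniz identity $\partial_t(g*v)=v+k*v$, then bounding $k=g'$ and $k'$ by logarithmic differentiation. The Taylor estimates $\alpha(t)=O(t^2)$, $\alpha'(t)=O(t)$ that you extract from Assumption~A are exactly what is needed to tame the $\alpha(t)/t$, $\alpha'(t)/t$, $\alpha(t)/t^2$ terms, leaving $-\alpha''(t)\ln t$ as the sole source of the $|\ln t|$ loss in $k'$; this matches the structure of the bounds in \eqref{k1}.
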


Motivated by Lemma \ref{Lem:Model}, it suffices to carry out analysis for model \eqref{Model}.
For simplicity, we omit the domain $\om$ and the time interval $(0,T)$ in the Sobolev spaces and norms, e.g., we write $\| \cdot\|_{L^2(L^2)}$ instead of $\| \cdot\|_{L^2(0,T;L^2(\Omega))}$ whenever no confusion arises.

\subsection{Analysis of an auxiliary equation}
We analyze the well-posedness and regularity estimates of the solutions to an auxiliary differential equation, for some $\lambda > 0$
\begin{equation}\begin{array}{l}\label{ode}
v^{\prime \prime}  + \lambda^2 v = q - \lambda^2 (k* v),\quad t \in (0, T], \quad v(0)=v_0, \quad v^{\prime}(0)=\hat v_0.
  \end{array}
 \end{equation}

\begin{theorem}
Suppose that {\it Assumption} A holds and $q\in L^2(0,T)$, the problem (\ref{ode}) admits a unique solution $v \in H^1(0, T)$, and the following estimate holds:
\begin{equation}\label{odestab:e1}\begin{array}{l}
\ds \|v\|_{H^1(0, T)}\leq Q\big(\lambda |v_0|+ |\hat v_0| +  \|q\|_{L^2(0, T)}\big).
\end{array}
\end{equation}
In addition, if $q \in H^1(0, T)$, we have
\begin{equation}\label{odestab:e2}\begin{array}{l}
\|v\|_{H^2(0, T)}\leq Q (\lambda^2 |v_0|  + \lambda |\hat v_0| + \|q\|_{H^1(0, T)}+ \lambda  \|q\|_{L^2(0,T)}).
\end{array}
\end{equation}
Furthermore, the following estimate holds
\begin{equation}\label{odestab:e3}\begin{array}{l}
\|v\|_{H^3(0, T)}\leq Q (\lambda^3 |v_0|  + \lambda^2 |\hat v_0| + \lambda \|q\|_{H^1(0, T)}+ \lambda^2  \|q\|_{L^2(0,T)}).
\end{array}
\end{equation}
Finally, suppose $q \in H^2(0,T)$, then
\begin{equation}\label{odestab:e4}\begin{array}{l}
\|v\|_{H^4(0,T)}\le Q (\lambda^4 |v_0| + \lambda^3  |\hat v_0| + \|q\|_{H^2(0,T)} + \lambda^2 \|q\|_{H^1(0,T)} + \lambda^3 \|q\|_{L^2(0,T)}).
	\end{array}
\end{equation}
\end{theorem}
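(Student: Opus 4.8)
The plan is to prove \eqref{odestab:e1} (together with well-posedness) by an energy argument, and then to obtain \eqref{odestab:e2}--\eqref{odestab:e4} by successively differentiating \eqref{ode} and reading the top-order temporal derivative of $v$ off the equation. Throughout, the generic constant $Q$ is permitted to depend on a fixed positive lower bound for $\lambda$ (in the intended application $\lambda$ equals an eigenvalue $\lambda_i\ge\lambda_1>0$), which allows a bare constant to be absorbed into a higher power of $\lambda$; this is what reconciles the various $\lambda$-weights appearing in \eqref{odestab:e1}--\eqref{odestab:e4} and in the lower-order terms carried between them.

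First I would establish \eqref{odestab:e1}. Testing \eqref{ode} with $v'$ yields the energy identity
\[
\tfrac12\big(|v'(t)|^2+\lambda^2|v(t)|^2\big)=\tfrac12\big(|\hat v_0|^2+\lambda^2|v_0|^2\big)+\int_0^t q\,v'\,ds-\lambda^2\int_0^t (k*v)\,v'\,ds,
\]
which gives uniqueness, while existence follows from the standard Volterra/Picard theory for \eqref{ode} recast as a second-kind integral equation with the bounded kernel $k$. Denoting the left-hand side by $E(t)$, the forcing term is handled by Cauchy--Schwarz and Young. The crux is the memory term: a crude bound costs a spurious power of $\lambda$, since $\lambda^2(k*v)\sim\lambda$ when $\lambda|v|\sim E^{1/2}$. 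To avoid this I integrate by parts once,
\[
-\lambda^2\int_0^t(k*v)\,v'\,ds=-\lambda^2(k*v)(t)\,v(t)+\lambda^2\!\int_0^t (k*v)'(s)\,v(s)\,ds,\qquad (k*v)'=k(0)v+k'*v .
\]
The boundary term is then split by Young's inequality into a small multiple of $E(t)$ and a multiple of $\int_0^tE$; the $k(0)$ term equals $k(0)\int_0^t\lambda^2|v|^2\le Q\int_0^tE$; and---this is the key point---the last term is bounded, via Young's convolution inequality, by $\|k'\|_{L^1(0,t)}\int_0^t\lambda^2|v|^2\,ds\le Q\int_0^tE$, which is finite precisely because \eqref{k1} forces $k'\in L^1(0,T)$. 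Gronwall's inequality gives $\sup_t E(t)\le Q\big(\lambda^2|v_0|^2+|\hat v_0|^2+\|q\|_{L^2(0,T)}^2\big)$, and \eqref{odestab:e1} follows.

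The higher estimates are algebraic consequences of the differentiated equation. For \eqref{odestab:e2} I rewrite \eqref{ode} as $v''=q-\lambda^2 v-\lambda^2(k*v)$, take $L^2$ norms, and use $\|k*v\|_{L^2}\le\|k\|_{L^1}\|v\|_{L^2}$ together with \eqref{odestab:e1}. For \eqref{odestab:e3} I differentiate once, obtaining $v'''=q'-\lambda^2 k(t)v_0-\lambda^2(k*v')-\lambda^2 v'$, and bound $\|v'''\|_{L^2}$ using $\|k\|_{L^2},\|k\|_{L^1}\le Q$ and \eqref{odestab:e1} for $\|v'\|_{L^2}$; this uses only $q\in H^1$. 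For \eqref{odestab:e4} I differentiate once more to get $v^{(4)}=q''-\lambda^2 k'(t)v_0-\lambda^2 k(t)\hat v_0-\lambda^2(k*v'')-\lambda^2 v''$, and control $\|v^{(4)}\|_{L^2}$ using \eqref{odestab:e2} for $\|v''\|_{L^2}$; here the term $\lambda^2 k'(t)v_0$ is estimated through $\|k'\|_{L^2}\le Q$, which again rests on \eqref{k1}, now via the square-integrability of $|\ln t|$. In each case the lower-order pieces are furnished by the previous estimate, and one reassembles the full Sobolev norm, absorbing bare constants into higher powers of $\lambda$ as above.

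I expect the main obstacle to be \eqref{odestab:e1}: keeping the constant in front of the data independent of $\lambda$ forces one to exploit the smoothness of $k$ through the integration by parts, rather than estimating the convolution crudely. The bounds \eqref{k1} on $k$ and $k'$ are exactly what make this step work, and they reappear in the top-order estimate \eqref{odestab:e4}.
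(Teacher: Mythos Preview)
Your proof is correct, but it follows a genuinely different route from the paper's.  For \eqref{odestab:e1} the paper does \emph{not} use an energy argument: it writes the solution of the linear wave equation by Duhamel's formula, $w=\lambda^{-1}q*\sin(\lambda t)-\lambda(k*v)*\sin(\lambda t)$, and proves existence via a contraction in the weighted space $\|e^{-\sigma t}(\cdot)'\|_{L^2}$.  The $\lambda$-independence is achieved by rewriting $-\lambda(k*v')*\sin(\lambda t)=-k*v'+k(0)v'*\cos(\lambda t)+(k'*v')*\cos(\lambda t)$, which removes the leading $\lambda$ at the cost of introducing $k'$; Young's inequality then gives the small factor $\sigma^{\varepsilon-1}$.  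For \eqref{odestab:e2}--\eqref{odestab:e4} the paper differentiates this explicit representation rather than the equation itself.  Your energy approach is more elementary and avoids the fixed-point machinery; it also shows that \eqref{odestab:e2} really only needs $q\in L^2$, not $q\in H^1$ (the paper incurs $\|q\|_{H^1}$ through the term $q(0)\cos(\lambda t)$ in the differentiated Duhamel formula).  The core mechanism is the same in both proofs: the bounds \eqref{k1}, in particular $k'\in L^1(0,T)$, are exactly what kills the spurious power of $\lambda$---in the paper via the convolution $(k'*v')*\cos(\lambda t)$, in your argument via the term $\lambda^2\!\int(k'*v)v$ after integrating by parts.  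Your remark that $Q$ is allowed to depend on a fixed lower bound $\lambda\ge\lambda_1>0$ is correct and is implicitly used in the paper as well (e.g.\ to pass from $\|v'\|_{L^2}$ to $\|v\|_{H^1}$ with the stated $\lambda$-weights on the data).
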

\begin{proof}
We first consider the case that $v(0) = v^{\prime}(0)=0$. Let $\mathcal X := \{z \in H^1(0,T), z(0) =z^\prime (0) = 0\}$ equipped with equivalent norm $\|z\|_{\mathcal X,\sigma} := \|e^{-\sigma t} z'\|_{L^2(0,T)}$ for some $\sigma\geq 0$. For each $v\in \mathcal X$, let $w:= \mathcal M v$ be the solution of
\begin{align}\label{ode4}
w^{\prime \prime}  + \lambda^2 w =q-\lambda^2 (k* v),\text{ for }t\in (0,T];~~w(0)=w^\prime(0)=0.
\end{align}
Then  the solution $w$ to (\ref{ode4}) could be formally expressed as \cite{Mcl}
\begin{align*}
w=\f{1}{\lambda} q * \sin(\lambda t) - \lambda (k*v)*\sin(\lambda t).
\end{align*}

To bound $\|w\|_{\mathcal X,\sigma}$, we directly differentiate the above equation to get
\begin{align}\label{w'}
w'=q * \cos(\lambda t)- \lambda (k*v^\prime)*\sin(\lambda t),
\end{align}
where the last term on the right-hand side could be reformulated as follows
\begin{equation}\label{ode:e1}\begin{array}{l}
- \lambda (k*v^\prime)*\sin(\lambda t) = -k*v^\prime+ (k*v^\prime)^{\prime}*\cos(\lambda t) \\[0.1in]
\ds \qquad   \qquad  \qquad \qquad  \,= -k*v^\prime +  k(0)v^\prime*\cos(\lambda t) + (k^\prime*v^\prime)*\cos(\lambda t).
\end{array}
\end{equation}

We incorporate \eqref{w'}-\eqref{ode:e1}, the estimate \eqref{k1}, and  apply Young's convolution inequality and the fact that
$e^{-\sigma t} [(k^\prime*v^\prime)*\cos(\lambda t)] = (e^{-\sigma t}k^\prime)*(e^{-\sigma t}v^\prime)*(e^{-\sigma t}\cos(\lambda t)) $
to bound
\begin{align}\label{ode7}
\|w\|_{\mathcal X,\sigma}\leq Q\|q\|_{L^2(0,T)}+Q\sigma^{\varepsilon-1}\|v\|_{\mathcal X,\sigma},
\end{align}
where we used the estimate
\begin{equation*}
  \ds \int_0^t e^{-\sigma y} y^{-\varepsilon} d y  =\sigma^{\varepsilon-1} \int_0^{\sigma t} e^{-z} z^{-\varepsilon} d z \leq Q\sigma^{\varepsilon-1}.
\end{equation*}
Thus, the mapping $\mathcal M:\mathcal X\rightarrow \mathcal X$ is well defined. To show its contractivity, let $w_i=\mathcal M v_i$ for $i =1$, $2$, then $e_w : = w_1- w_2$ and $e_v : = v_1- v_2$ satisfy
\begin{align}\label{ode6}
e_w^{\prime \prime}+\lambda^2 e_w   = -\lambda^2 (k*e_v).
\end{align}
Then an application of the estimate (\ref{ode7}) to (\ref{ode6}) gives
\begin{align*}
\|e_w\|_{\mathcal X,\sigma}\leq Q\sigma^{\varepsilon-1}\|e_v\|_{\mathcal X,\sigma}.
\end{align*}
Choose a sufficiently large $\sigma$ such that the mapping $\mathcal M$ is a contraction. By the Banach fixed point theorem, $\mathcal M$ has a unique fixed point $v=\mathcal Mv$ with the estimate $\|w\|_{\mathcal X,\sigma}\leq Q\|q\|_{L^2(0,T)}$.

For the problem \eqref{ode} with inhomogeneous initial conditions, we obtain
\begin{equation}\label{Sol}
 v =  v_0 \cos(\lambda t) + \f{\hat v_0}{\lambda} \sin(\lambda t) +w,
 \end{equation}
where $w$ is the fixed point of $\mathcal M$. Thus $v$ solves the problem (\ref{ode}). Differentiate the above expression to obtain
$${ v}^{\prime}=-\lambda v_0 \sin(\lambda t)  + \hat v_0 \cos(\lambda t)+w',$$
which, together with $\|w\|_{\mathcal X,\sigma}\leq Q\|q\|_{L^2(0,T)}$, leads to
\begin{equation}\label{odestab}\begin{array}{l}
\ds \|v\|_{H^1(0,T)}\leq Q\big(\lambda|v_{0}|+ |\hat v_0| + \|q\|_{L^2(0,T)}\big).
\end{array}
\end{equation}
The uniqueness of the solution to the problem (\ref{ode}) follows from that of (\ref{ode}) with $v_0 = \hat v_0 =0$. Consequently, we conclude that  (\ref{ode}) admits  a unique solution in $H^1(0,T)$ with the estimate (\ref{odestab:e1}).

To bound $v^{\prime \prime}$, we incorporate \eqref{w'}--\eqref{ode:e1} to differentiate \eqref{Sol} twice to obtain
\begin{equation}\label{vtt}\begin{array}{l}
\ds v^{\prime \prime}= -\lambda^2 v_0 \cos(\lambda t)  -\lambda \hat v_0 \sin(\lambda t) + q(0) \cos(\lambda t) + q^{\prime} * \cos(\lambda t)\\[0.1in]
\ds \qquad \qquad  - \lambda k(0) v^{\prime}*\sin(\lambda t)-\lambda (k^{\prime} * v^{\prime})*\sin(\lambda t),
\end{array}
\end{equation}
which, together with \eqref{k1}, \eqref{odestab}, the Sobolev embedding $H^1(0,T) \rightarrow L^\infty(0, T)$, and Young's inequality, gives
\begin{equation}\label{vtt:e1}\begin{array}{l}
\ds \|v^{\prime \prime}\|_{L^2} \le Q (\lambda^2 |v_0|  + \lambda |\hat v_0| + \|q\|_{H^1(0, T)} +\lambda\|v\|_{H^1(0, T)})\\[0.1in]
\ds \qquad \qquad \le Q (\lambda^2 |v_0|  + \lambda |\hat v_0| + \|q\|_{H^1(0, T)}+ \lambda  \|q\|_{L^2(0,T)}).
\end{array}
\end{equation}
We invoke this estimate with \eqref{odestab} to prove \eqref{odestab:e2}.

To bound $v^{\prime \prime \prime}$, we further differentiate \eqref{vtt} to obtain
\begin{equation}\label{vttt}\begin{array}{l}
\ds v^{\prime \prime \prime}= \lambda^3 v_0 \sin(\lambda t)  -\lambda^2 \hat v_0 \cos(\lambda t) -\lambda q(0) \sin(\lambda t) + q^{\prime}-\lambda q^{\prime} * \sin(\lambda t)\\[0.1in]
\ds \qquad \qquad  - \lambda^2 k(0) v^{\prime}*\cos(\lambda t)  -\lambda^2 (k^{\prime} * v^{\prime})*\cos(\lambda t).
\end{array}
\end{equation}
We combine \eqref{k1}, \eqref{odestab}, and \eqref{vtt:e1} to obtain
\begin{equation*}
\begin{array}{l}
\ds \| v^{\prime \prime \prime}\|_{L^2} \le Q (\lambda^3 |v_0|  + \lambda^2 |\hat v_0| + \lambda \|q\|_{H^1(0, T)} + \lambda^2\|v\|_{H^1(0, T)})\\[0.1in]
\ds \qquad \qquad \le Q (\lambda^3 |v_0|  + \lambda^2 |\hat v_0| + \lambda \|q\|_{H^1(0, T)}+ \lambda^2  \|q\|_{L^2(0,T)}).
\end{array}
\end{equation*}
To bound $v^{\prime \prime \prime \prime}$, we further differentiate \eqref{vttt} to obtain
\begin{equation*}
\begin{array}{l}
\ds v^{\prime \prime \prime \prime}= \lambda^4 v_0 \cos(\lambda t) + \lambda^3 \hat v_0 \sin(\lambda t) - \lambda^2 q(0) \cos(\lambda t) + q^{\prime \prime} - \lambda^2 q^{\prime} * \cos(\lambda t) \\[0.1in]
\ds \quad \quad \quad- \lambda^2  k(0) v^{\prime} + \lambda^3 k(0) v^{\prime} * \sin(\lambda t) -\lambda^2 (k^{\prime} * v^{\prime}) +\lambda^3 (k^{\prime} * v^{\prime}) * \sin(\lambda t).
	\end{array}
\end{equation*}
We combine \eqref{k1}, \eqref{odestab}, and \eqref{odestab:e2} to obtain
\begin{equation*}
\begin{array}{l}
\ds \|v^{\prime \prime \prime \prime}\|_{L^2} \le Q (\lambda^4 |v_0| + \lambda^3 |\hat v_0| + \|q\|_{H^2(0,T)} + \lambda^2 \|q\|_{H^1(0,T)} + \lambda^3 \|v\|_{H^1}) \\[0.1in]
\ds \qquad \quad \le Q(\lambda^4 |v_0| + \lambda^3 |\hat v_0| + \|q\|_{H^2(0,T)} + \lambda^2 \|q\|_{H^1(0,T)} + \lambda^3 \|q\|_{L^2}(0,T)).
	\end{array}
\end{equation*}
This finishes the proof.
\end{proof}

\subsection{Analysis of model (\ref{Model})}

We now prove the well-posedness and solution regularity of the problem \eqref{Model} to support the error estimate of its numerical scheme.
\begin{theorem}\label{thm1:pde}
Suppose  {\it Assumption} A holds and that $f \in L^2(L^2)$, $u_{0} \in \check H^1$, and $\hat u_0 \in L^2$,  then the problem (\ref{Model})  has a unique solution $u \in H^1(L^2)$ with
\begin{equation}\label{pdestab1}\begin{array}{l}
\ds \|u\|_{H^1(L^2)}\leq Q\big(\|u_{0}\|_{\check H^1}+ \|\hat u_{0}\|_{L^2}+ \|f\|_{L^2(L^2)}\big).
\end{array}
\end{equation}
Suppose that $f \in H^1(L^2) \cap L^2(\check H^1)$, $u_{0} \in \check H^2$, and $\hat u_0 \in \check H^1$,  then the following estimate holds
\begin{equation}\label{pdestab2}\begin{array}{l}
\ds \|u\|_{H^2(L^2)} + \|u\|_{L^2(\check H^2)}\leq Q\big(\|u_{0}\|_{\check H^2}+ \|\hat u_{0}\|_{\check H^1}+ \|f\|_{H^1(L^2)} + \|f\|_{L^2(\check H ^1)}\big).
\end{array}
\end{equation}
In addition, suppose that $f \in H^1(\check H^1) \cap L^2(\check H^2)$, $u_{0} \in \check H^3$, and $\hat u_0 \in \check H^2$,  we have
\begin{equation}\label{pdestab3}\begin{array}{l}
\|u\|_{H^3(L^2)}  +\| u\|_{H^1(\check H^2)}  \leq Q\big(\|u_{0}\|_{\check H^3}+ \|\hat u_{0}\|_{\check H^2}+ \|f\|_{H^1(\check H^1)} + \|f\|_{L^2(\check H ^2)}\big).
\end{array}
\end{equation}
Furthermore, suppose that $f \in H^2(\check{H}^2) \cap L^2(\check{H}^3)$, $u_0 \in \check{H}^4$, and $\hat u_0 \in \check{H}^3$, we have
\begin{equation}\label{pdestab4}\begin{array}{l}
\|u\|_{H^4(L^2)}  +\| u\|_{H^2(\check H^2)}  \leq Q\big(\|u_{0}\|_{\check H^4}+ \|\hat u_{0}\|_{\check H^3}+ \|f\|_{H^2(\check H^2)} + \|f\|_{L^2(\check H ^3)}\big).
	\end{array}
\end{equation}
\end{theorem}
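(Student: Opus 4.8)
The plan is to diagonalize the problem through the spectral basis $\{\phi_i\}$ of $\mathcal L=-K\Delta$ and reduce each estimate to the corresponding scalar bound already established for the auxiliary ODE \eqref{ode}. First I would write $u(\bm x,t)=\sum_{i=1}^\infty u_i(t)\phi_i(\bm x)$, $f=\sum_i f_i(t)\phi_i$, $u_0=\sum_i u_{0,i}\phi_i$, and $\hat u_0=\sum_i \hat u_{0,i}\phi_i$. Projecting \eqref{Model} onto $\phi_i$ and using $-K\Delta\phi_i=\lambda_i^2\phi_i$ converts the convolution term $Kk*\Delta u$ into $-\lambda_i^2(k*u_i)$, so each mode satisfies
\begin{equation*}
u_i'' + \lambda_i^2 u_i = f_i - \lambda_i^2(k*u_i),\quad u_i(0)=u_{0,i},\quad u_i'(0)=\hat u_{0,i},
\end{equation*}
which is precisely \eqref{ode} with $\lambda=\lambda_i$ and $q=f_i$. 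Hence the previous theorem applies verbatim to each $u_i$.

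The second step is Parseval bookkeeping. Using orthonormality of $\{\phi_i\}$ and the definition of $\check{H}^s$, the time-Sobolev norms decouple as $\|u\|_{H^m(L^2)}^2=\sum_i\|u_i\|_{H^m(0,T)}^2$ and $\|u\|_{H^m(\check{H}^s)}^2=\sum_i\lambda_i^{2s}\|u_i\|_{H^m(0,T)}^2$, with analogous identities for $f$ and with $\|u_0\|_{\check{H}^s}^2=\sum_i\lambda_i^{2s}u_{0,i}^2$, $\|\hat u_0\|_{\check{H}^s}^2=\sum_i\lambda_i^{2s}\hat u_{0,i}^2$. For the highest estimate \eqref{pdestab4} I would apply \eqref{odestab:e4} to bound $\|u_i\|_{H^4(0,T)}$ and \eqref{odestab:e2} (multiplied by $\lambda_i^2$) to bound $\lambda_i^2\|u_i\|_{H^2(0,T)}$, then square each bound, sum over $i$, and identify the resulting weighted sums: $\sum_i\lambda_i^8u_{0,i}^2=\|u_0\|_{\check{H}^4}^2$, $\sum_i\lambda_i^6\hat u_{0,i}^2=\|\hat u_0\|_{\check{H}^3}^2$, $\sum_i\|f_i\|_{H^2}^2=\|f\|_{H^2(L^2)}^2$, $\sum_i\lambda_i^4\|f_i\|_{H^1}^2=\|f\|_{H^1(\check{H}^2)}^2$, and $\sum_i\lambda_i^6\|f_i\|_{L^2}^2=\|f\|_{L^2(\check{H}^3)}^2$. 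Since $\lambda_i\ge\lambda_1>0$, the terms $\|f\|_{H^2(L^2)}^2$ and $\|f\|_{H^1(\check{H}^2)}^2$ are both dominated by $\|f\|_{H^2(\check{H}^2)}^2$, while the last sum is exactly $\|f\|_{L^2(\check{H}^3)}^2$; collecting these yields \eqref{pdestab4}. The lower-order estimates \eqref{pdestab1}--\eqref{pdestab3} follow by the same route from \eqref{odestab:e1}--\eqref{odestab:e3} with the powers of the weights shifted accordingly.

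For well-posedness I would use \eqref{pdestab1} (derived as above from \eqref{odestab:e1}) to show the partial sums of the series for $u$ and $\partial_t u$ are Cauchy in $L^2(L^2)$, so that $u\in H^1(L^2)$ exists; uniqueness is immediate since with $u_0=\hat u_0=0$ and $f=0$ every mode $u_i$ vanishes by the zero-data case of the ODE theorem. One then verifies a posteriori that the constructed $u$ solves \eqref{Model} in the weak sense, the extra regularity from \eqref{pdestab2}--\eqref{pdestab4} legitimizing the termwise action of $\partial_t^2$, $-K\Delta$, and the convolution.

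The main obstacle I anticipate is not any single inequality but the rigorous justification of the eigenfunction expansion: ensuring the series for $u$ and its space/time derivatives converge in the stated spaces so that the formal projection is legitimate, and that interchanging the infinite sum with $\partial_t^2$, $-K\Delta$, and the time convolution $k*(\cdot)$ is valid. Once the summability supplied by the squared ODE estimates is in hand, this reduces to a standard density/limiting argument, and the remaining effort is the careful matching of the powers of $\lambda_i$ to the weighted $\check{H}^s$ norms on the right-hand side.
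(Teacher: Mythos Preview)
Your approach is essentially the paper's: expand in the eigenbasis $\{\phi_i\}$, reduce \eqref{Model} to the scalar ODEs \eqref{ode}, apply the estimates \eqref{odestab:e1}--\eqref{odestab:e4} mode by mode, and reassemble via Parseval. For the time regularity $\|u\|_{H^m(L^2)}$ and for the top estimate \eqref{pdestab4} that you worked out in detail, your bookkeeping with the weights $\lambda_i$ is exactly right and matches the paper.

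There is one genuine methodological difference worth noting. You propose to obtain the spatial pieces $\|u\|_{L^2(\check H^2)}$, $\|u\|_{H^1(\check H^2)}$, $\|u\|_{H^2(\check H^2)}$ purely spectrally by multiplying the ODE bounds by powers of $\lambda_i$; this works cleanly for \eqref{pdestab3} and \eqref{pdestab4}, but for \eqref{pdestab2} the stated bound \eqref{odestab:e1} only gives $\lambda_i^2\|u_i\|_{L^2}\le Q(\lambda_i^3|u_{0,i}|+\cdots)$, which would force $u_0\in\check H^3$ rather than $\check H^2$. The paper sidesteps this by bounding $\mathcal Lu$ directly from the PDE: writing $\mathcal Lu=f-\partial_t^2u-k*\mathcal Lu$ and closing with the $e^{-\sigma t}$--weighted $L^2(L^2)$ norm (the same trick used in the ODE existence proof) yields $\|\mathcal Lu\|_{L^2(L^2)}\le Q(\|f\|_{L^2(L^2)}+\|u\|_{H^2(L^2)})$, after which \eqref{pdestab2} follows. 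Your spectral route can be repaired identically at the ODE level (use $\lambda^2 v=q-v''-\lambda^2 k*v$ with the weighted norm to get $\lambda^2\|v\|_{L^2}\le Q(\|q\|_{L^2}+\|v''\|_{L^2})$, uniformly in $\lambda$), so the gap is minor; but as written your phrase ``with the powers of the weights shifted accordingly'' hides this extra step for \eqref{pdestab2}.
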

\begin{proof}
For $t \in [0, T]$, we expand $u$ and $f$ in \eqref{Model} with respect to  $\{\phi_i\}_{i=1}^{\infty}$ as follows \cite{SakYam,ZheWanSICON}
$$u=\sum_{i=1}^\infty u_i(t)\phi_i(\bm x),~u_i(t) := \big (u(\cdot,t),\phi_i \big),~~f=\sum_{i=1}^\infty f_i(t)\phi_i(\bm x),~f_i(t) := \big (f(\cdot,t),\phi_i \big).$$
We invoke these expressions into (\ref{Model}) to find that u solves \eqref{Model} if and only if $\{u_i\}_{i=1}^{\infty}$ satisfy the following integro-differential equations for $i=1,2, \cdots$
\begin{align}\label{ode1}
u_i^{\prime \prime}+\lambda_i^2 u_i = f_i -\lambda_i^2 (k* u_i),\quad t\in (0,T]
\end{align}
with $u_i(0)=u_{0,i}:=(u_0,\phi_i)$, and  $u_i^{\prime}(0)=\hat u_{0,i}:=(\hat u_0,\phi_i)$.

We first invoke  \eqref{odestab:e1} to bound $\bar u : = \int_0^t \sum_{i=1}^\infty u_i^{\prime}(s) \phi_i(\bm x) ds + u_0$ by
\begin{equation}\label{PDE:e1}\begin{split}
\ds \|\bar u\|_{H^1(L^2)}^2& \leq Q \|\p_t \bar u\|^2_{L^2(L^2)} = \sum_{i=1}^\infty \|u_i^{ \prime}\|_{L^2(0,T)}^2\\
&\hspace{-0.1in}\ds \leq  Q\sum_{i=1}^\infty \big(\lambda_i^2|u_{0,i}|^2 + |\hat u_{0,i}|^2 +\|f_i\|_{L^2(0,T)}^2\big)\\
&\hspace{-0.1in}\ds  =Q\big(\|u_0\|_{\check H^1}^2 +\|\hat u_0\|_{L^2}^2  +\|f\|^2_{L^2(L^2)}\big).
\end{split}
\end{equation}
As $u_i(t) = \int_0^t u_i'(s)ds + u_{0,i}$ satisfies the differential equation in (\ref{ode1}) for $i \ge 1$, we conclude that $\bar u \in H^1(L^2)$ is a solution to problem (\ref{Model}).
The uniqueness of the solution to the problem (\ref{Model})   follows from that of the differential equation in (\ref{ode1}). We thus prove the first statement of the theorem.

We further combine   (\ref{odestab:e2})  and follow the procedures in (\ref{PDE:e1})  to obtain
\begin{equation*}\begin{split}
\ds  \|\p_t^2u  \|^2_{L^2(L^2)} &  = \sum_{i=1}^\infty  \| u_i^{\prime \prime}\|_{L^2(0,T)}^2\\
&\leq Q\sum_{i=1}^\infty \big( \|f_i\|^2_{H^1(0,T)} + \lambda_i^2 \|f_i\|^2_{L^2(0,T)} + \lambda_i^4 |u_{0,i}|^2+\lambda_i^2 |\hat u_{0,i}|^2 \big)\\
&\ds \le Q \big ( \|f\|^2_{H^1(L^2)} +\|f\|^2_{L^2(\check H^1)}+ \|u_0\|^2_{\check H^2}+\|\hat u_0\|^2_{\check H^1} \big),
\end{split}\end{equation*}
which, together with \eqref{pdestab1}, gives the estimate for $\|u\|_{H^2(L^2)}$ in \eqref{pdestab2}.

To bound $\mathcal{L} u$, we incorporate the equation \eqref{Model}, \eqref{k1} and Young's inequality to obtain
\begin{equation*}\begin{array}{l}
\|e^{-\sigma t} \mathcal{L} u\|_{L^2(L^2)} = \|e^{-\sigma t}(f - \p_t^2u -  k*\mathcal{L} u)\|_{L^2(L^2)}\\[0.1in]
\qquad  \le Q(\|u\|_{H^2(L^2)} + \|f\|_{L^2(L^2)} + \sigma^{\varepsilon-1} \|e^{-\sigma t}\mathcal{L} u\|_{L^2(L^2)}).
 \end{array}\end{equation*}
 Choose a sufficient large $\sigma$ to cancel the last term on the right-hand side of the above inequality to obtain
 \begin{equation*}\begin{array}{l}
\|u\|_{L^2{(\check H^2)}} \le Q \|\mathcal{L} u\|_{L^2(L^2)} \le Q(\|u\|_{H^2(L^2)} + \|f\|_{L^2(L^2)})\\[0.1in]
\ds \qquad \qquad \qquad \le Q\big(\|u_{0}\|_{\check H^2}+ \|\hat u_{0}\|_{\check H^1}+ \|f\|_{H^1(L^2)} + \|f\|_{L^2(\check H ^1)}\big).
 \end{array}\end{equation*}

 We combine \eqref{odestab:e3}, \eqref{pdestab2} and follow the steps in \eqref{PDE:e1} to obtain
\begin{equation}\label{PDE:e2}\begin{array}{l}
\hspace{-0.15in}\ds  \|u  \|_{H^3(L^2)} \le Q \big(\|u_{0}\|_{\check H^3}+ \|\hat u_{0}\|_{\check H^2}+ \|f\|_{H^1(\check H^1)} + \|f\|_{L^2(\check H ^2)}\big).
\end{array}\end{equation}
We then invoke \eqref{pdestab2}, \eqref{PDE:e2}, the Young's inequality and the relation $\p_t \mathcal{L} u =  \p_t f - \p_t^3 u - k(0)\mathcal{L} u - k^{\prime}*\mathcal{L} u$ from \eqref{Model} to obtain
 \begin{equation}\label{PDE:e3}\begin{array}{l}
\hspace{-0.15in}\ds  \|\p_t \mathcal{L} u   \|_{L^2(L^2)} \le \| \p_t f - \p_t^3 u - k(0)\mathcal{L} u - k^{\prime}*\mathcal{L} u\|_{L^2(L^2)}\\[0.1in]
\ds \qquad \qquad \qquad \le Q \big(\|u_{0}\|_{\check H^3}+ \|\hat u_{0}\|_{\check H^2}+ \|f\|_{H^1(\check H^1)} + \|f\|_{L^2(\check H ^2)}\big).
\end{array}\end{equation}
We  combine \eqref{pdestab2} and  \eqref{PDE:e2}--\eqref{PDE:e3} to prove \eqref{pdestab3}.
We combine \eqref{odestab:e4}, \eqref{pdestab3} and follow the steps in \eqref{PDE:e1} to obtain
\begin{equation}\label{PDE:e4}\begin{array}{l}
\hspace{-0.15in}\ds  \|u\|_{H^4(L^2)} \le Q (\|u_0\|_{\check H^4} + \|\hat u_0\|_{\check H^3} + \|f\|_{H^2(L2)} + \|f\|_{H^1(\check H^2)} + \|f\|_{L^2(\check H^3)})\\[0.1in]
\ds \qquad \qquad \le Q(\|u_0\|_{\check H^4} + \|\hat u_0 \|_{\check H^3} +\|f\|_{H^2(\check H^2)} + \|f\|_{L^2(\check H^3)}).
\end{array}\end{equation}
We then invoke  \eqref{k1}, \eqref{pdestab3}, \eqref{PDE:e4}, Young's inequality and the relation $\p_t^2 \mathcal{L} u =\p_t^2 f - \p_t^4 u - k(0) \p_t \mathcal{L} u - k^{\prime} \mathcal{L}u_0 - k^{\prime}*\p_t\mathcal{L} u$ from \eqref{Model} to obtain
\begin{equation}\label{PDE:e5}\begin{array}{l}
\hspace{-0.15in}\ds  \|\p_t^2 \mathcal{L} u   \|_{L^2(L^2)} \le \|\p_t^2 f - \p_t^4 u - k(0) \p_t \mathcal{L} u - k^{\prime} \mathcal{L}u_0 - k^{\prime}*\p_t\mathcal{L} u\|_{L^2(L^2)}\\[0.1in]
\ds \qquad \qquad \quad \le Q \big(\|u_{0}\|_{\check H^4}+ \|\hat u_{0}\|_{\check H^3}+ \|f\|_{H^2(\check H^2)} + \|f\|_{L^2(\check H ^3)}\big).
\end{array}\end{equation}
We  combine \eqref{pdestab3} and  \eqref{PDE:e4}--\eqref{PDE:e5} to prove \eqref{pdestab4}.
We thus complete the proof.
\end{proof}

\section{A finite element approximation and its fast algorithm}\label{sec3}

\subsection{A Ritz-Volterra projection}

We consider a quasi-uniform partition of $\Omega$ with mesh parameter $h$, and let $S_h$ be the space of continuous,
piecewise linear functions on $\Omega$ with respect to this partition. We define a bilinear
form $a(\cdot,\cdot)$ as
\begin{equation*}
a(u,\chi):=({K}\nabla u,\nabla \chi),\quad\forall\chi \in H^1_0(\Omega).
\end{equation*}
Let $R_h:H^1_0(\Omega)\to S_h(\Omega)$ be the Ritz projection defined by
\begin{equation*}
a(u-R_hu,\chi_h)=0,\quad \forall \chi_h \in S_h.
\end{equation*}
Then $R_hu$ satisfies the approximation property \cite{Tho}
\begin{equation*}
\begin{array}{ll}
\ds \| u-R_hu \|+h\| u-R_hu\|_1\le Qh^2\| u\|_2, \quad \forall u\in H^2(\Omega)\cap H_0^1(\Omega).
\end{array}
\end{equation*}
By \eqref{k1}, we follow \cite{CanLin,CheThoWah,LinThoWal} to define the Ritz-Volterra projection $V_h:L^1(H^1_0)\to L^1(S_h)$ as follows
\begin{equation}\label{Def:RV}
a((V_hu-u)(t),\chi_h)+k(t)*a((V_hu-u)(t),\chi_h)=0,\quad \forall \chi_h \in S_h.
\end{equation}
Define $\rho:=V_hu-u$. We refer to some lemmas from  \cite{CanLin,CheThoWah,LinThoWal} for future use.
\begin{lemma}\label{thm:Vh}
For $u\in L^1(H_0^1)$, there exists the Ritz-Volterra projection $V_hu\in L^1(S_h)$ such that
\begin{equation*}
\begin{split}
\|V_hu\|_{L^1(H^1)}\le Q\|u\|_{L^1(H^1)}.
\end{split}
\end{equation*}
\end{lemma}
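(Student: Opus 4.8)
The plan is to read the defining relation \eqref{Def:RV} as a linear Volterra integral equation of the second kind posed in the finite-dimensional space $S_h$, handle existence and uniqueness by the classical resolvent construction, and then extract the stability bound by an energy estimate closed with Gronwall's inequality. Writing $w:=V_hu$, relation \eqref{Def:RV} is equivalent to
\begin{equation*}
a(w(t),\chi_h)+\big(k*a(w,\chi_h)\big)(t)=a(u(t),\chi_h)+\big(k*a(u,\chi_h)\big)(t),\quad\forall\chi_h\in S_h.
\end{equation*}
Fixing a basis $\{\psi_j\}$ of $S_h$ and letting $A_{ij}=a(\psi_j,\psi_i)$ denote the associated stiffness matrix, which is symmetric positive definite since $a(\cdot,\cdot)$ is coercive on $S_h\subset H^1_0$, the relation reduces to $A\mathbf c(t)+\int_0^t k(t-s)A\mathbf c(s)\,ds=\mathbf g(t)$ for the coefficient vector $\mathbf c(t)$, where $\mathbf g$ collects the right-hand side functionals. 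Premultiplying by $A^{-1}$ yields the standard second-kind Volterra system $\mathbf c+k*\mathbf c=A^{-1}\mathbf g$, whose kernel $k$ is bounded by \eqref{k1}; the resolvent (Neumann series) construction then delivers a unique solution $\mathbf c\in L^1(0,T)$ from data $\mathbf g\in L^1(0,T)$, hence a unique $V_hu\in L^1(S_h)$.

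For the norm bound I would test the rewritten relation with $\chi_h=w(t)$. Coercivity gives $\|w(t)\|_1^2\le Q\,a(w(t),w(t))$, while continuity of $a(\cdot,\cdot)$ together with $|k|\le Q$ from \eqref{k1} control the remaining terms; after dividing by $\|w(t)\|_1$ (the case $w(t)=0$ being trivial) one arrives at the pointwise inequality
\begin{equation*}
\|w(t)\|_1\le Q\|u(t)\|_1+Q\int_0^t\|u(s)\|_1\,ds+Q\int_0^t\|w(s)\|_1\,ds.
\end{equation*}
Applying Gronwall's inequality bounds $\|w(t)\|_1$ pointwise by the data terms, and integrating over $(0,T)$, using Fubini to estimate the iterated integrals of $\|u\|_1$ by $T\|u\|_{L^1(H^1)}$, produces $\|V_hu\|_{L^1(H^1)}\le Q\|u\|_{L^1(H^1)}$, which is the assertion.

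The main obstacle is the coupling between the convolution term and the test function: the quantity $\big(k*a(w,\chi_h)\big)(t)$ evaluated at $\chi_h=w(t)$ is a genuine space-time term that cannot be absorbed into the coercive principal part. The decisive observation is that, because $k$ is merely bounded rather than weakly singular (the estimate $|k|\le Q$ in \eqref{k1}, which is precisely what the reformulation of Lemma \ref{Lem:Model} together with \emph{Assumption} A delivers), this term contributes only a convolution-type integral $\int_0^t\|w(s)\|_1\,ds$ with a bounded kernel, exactly the structure required for Gronwall. Had $k$ retained the $t^{-\alpha}$ singularity of the original formulation, one would instead have to invoke a singular Gronwall inequality; here the boundedness of $k$ keeps the stability argument elementary.
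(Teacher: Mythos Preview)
The paper does not supply its own proof of this lemma; it is quoted directly from the references \cite{CanLin,CheThoWah,LinThoWal}. Your argument is correct and is essentially the standard one found there: existence in $S_h$ via the second-kind Volterra resolvent (legitimate because $S_h$ is finite-dimensional and $k\in L^\infty$ by \eqref{k1}), followed by the energy estimate obtained from testing \eqref{Def:RV} with $\chi_h=V_hu(t)$, using coercivity and continuity of $a(\cdot,\cdot)$, the bound $|k|\le Q$, and Gronwall. One point worth making explicit is that the resolvent construction alone would deliver a constant depending on $\dim S_h$, hence on $h$; it is the energy step---where only the $h$-independent coercivity and continuity constants of $a(\cdot,\cdot)$ enter---that furnishes the uniform bound the lemma asserts. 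Your proof already separates these two stages, so the result follows as claimed.
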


\begin{lemma}\label{thm:RV1}
Suppose $u \in W^{1,1}(\check H^2)$ and $u_0 \in \check H^2$, then the following estimate holds for  $ 0\le t \le T$
\begin{equation*}
\|\rho(\cdot,t)\|+h\|\rho(\cdot,t)\|_{\check H^1}\le Qh^2\big(  \|u_0\|_{\check H^2}
+ \|u\|_{W^{1,1}(0,t;\check H^2)} \big).
\end{equation*}
In addition, suppose $\hat u_0 \in \check H^2$ and $u \in W^{2,1}(\check H^2)$, then we have
\begin{equation*}
\begin{array}{c}
\ds
\|\p_t\rho(\cdot,t)\|+h\|\p_t\rho(\cdot,t)\|_{\check H^1}\le Qh^2\left(\|u_0\|_{\check H^2}+\|\hat u_0\|_{\check H^2}+\| u\|_{W^{2,1}(0,t;\check H^2)}\right).
\end{array}
\end{equation*}
\end{lemma}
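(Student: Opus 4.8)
The plan is to reduce the Ritz--Volterra error $\rho=V_hu-u$ to the ordinary Ritz projection error and then convert the resulting spatial bound into the stated norms by integrating in time. I would first introduce the time-independent Ritz projection $R_h$ and split $\rho(t)=\eta(t)+\theta(t)$, where $\eta:=R_hu-u$ and $\theta:=V_hu-R_hu\in S_h$. The decisive structural feature of \eqref{Def:RV} is that the memory form is the \emph{scalar} kernel $k$ multiplying the principal form $a$. Since $a(\eta(s),\chi_h)=0$ for all $\chi_h\in S_h$ and all $s$, inserting $\rho=\eta+\theta$ into \eqref{Def:RV} annihilates every contribution of $\eta$ and leaves the homogeneous identity
\[
a(\theta(t),\chi_h)+\int_0^t k(t-s)\,a(\theta(s),\chi_h)\,ds=0,\qquad\forall\,\chi_h\in S_h.
\]

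Next I would show $\theta\equiv0$. Taking $\chi_h=\theta(t)$ and using the coercivity and boundedness of $a$ on $S_h$ together with $|k|\le Q$ from \eqref{k1} gives $\|\theta(t)\|_{\check H^1}\le Q\int_0^t\|\theta(s)\|_{\check H^1}\,ds$; since $\theta(0)=0$, Gronwall's inequality forces $\theta\equiv0$. Hence $V_hu=R_hu$ and $\rho=\eta=R_hu-u$, so the two projections coincide in this setting. The first estimate then follows from the classical bound $\|\eta(t)\|+h\|\eta(t)\|_{\check H^1}\le Qh^2\|u(t)\|_{\check H^2}$ (using $\check H^2=H^2\cap H^1_0$ and elliptic regularity on the convex domain) combined with the elementary inequality $\|u(t)\|_{\check H^2}\le\|u_0\|_{\check H^2}+\int_0^t\|\partial_s u(s)\|_{\check H^2}\,ds\le\|u_0\|_{\check H^2}+\|u\|_{W^{1,1}(0,t;\check H^2)}$.

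For the derivative estimate I would exploit that $R_h$ does not depend on $t$, so that $\partial_t\rho=\partial_t\eta=(R_h-I)\partial_t u$. Applying the same Ritz bound with $u$ replaced by $\partial_t u$ yields $\|\partial_t\rho(t)\|+h\|\partial_t\rho(t)\|_{\check H^1}\le Qh^2\|\partial_t u(t)\|_{\check H^2}$, and then $\|\partial_t u(t)\|_{\check H^2}\le\|\hat u_0\|_{\check H^2}+\|u\|_{W^{2,1}(0,t;\check H^2)}$ (retaining the harmless $\|u_0\|_{\check H^2}$) gives the second estimate.

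The part I expect to require the most care is making the collapse rigorous: one needs $V_hu$ to be well defined and sufficiently regular in $t$ to choose $\chi_h=\theta(t)$ and to differentiate, which is provided by the existence statement of Lemma~\ref{thm:Vh} and the bound $|k|\le Q$ in \eqref{k1}. It is worth stressing that the reduction $V_hu=R_hu$ is special to the present scalar-kernel situation; in the general Ritz--Volterra framework of \cite{CanLin,LinThoWal} the right-hand side of the split identity does not vanish and the derivative estimate instead requires differentiating \eqref{Def:RV}, producing $k(0)\,a(\rho(t),\chi_h)+\int_0^t k'(t-s)\,a(\rho(s),\chi_h)\,ds$, where the integrability of $|k'(t)|\le Q(|\ln t|+1)$ from \eqref{k1} is exactly what controls the memory term. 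This is the mechanism I would fall back on if the scalar structure were unavailable.
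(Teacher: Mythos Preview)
Your proposal is correct, and it is genuinely different from the route the paper invokes. The paper does not prove this lemma itself but imports it from the general Ritz--Volterra framework of \cite{CanLin,LinThoWal}, where the memory bilinear form may differ from the principal one; in that setting the split $\rho=\eta+\theta$ leaves a nonzero right-hand side, and one must bound $\|\theta\|_{\check H^1}$ by Gronwall with a source term $Qh\|u\|_{\check H^2}$ and then run a duality argument for the $L^2$ estimate (exactly the pattern the paper reproduces in its proof of Theorem~\ref{thm4} for $\partial_t^2\rho$). You instead observe that here the memory form is the \emph{same} time-independent $a(\cdot,\cdot)$ scaled by the scalar kernel $k$, so $a(\eta(s),\chi_h)\equiv 0$ kills every term and forces $\theta\equiv 0$, i.e.\ $V_hu=R_hu$. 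This collapse is legitimate: the homogeneous Volterra identity $a(\theta(t),\theta(t))=-\int_0^t k(t-s)\,a(\theta(s),\theta(t))\,ds$ together with $|k|\le Q$ and $\theta\in L^1(S_h)$ (Lemma~\ref{thm:Vh}) yields $\|\theta(t)\|_{\check H^1}\le Q\int_0^t\|\theta(s)\|_{\check H^1}\,ds$, hence $\theta=0$ a.e. Your observation in fact simplifies Theorem~\ref{thm4} as well, since $\partial_t^2\rho=(R_h-I)\partial_t^2u$ immediately gives the $L^1(0,t;L^2)$ bound without the $t^{-\epsilon}$ singularity that appears in the paper's argument. The general approach from the references has the advantage of being robust to time-dependent or distinct memory forms; your reduction is sharper and shorter for the specific model \eqref{Model}, and your closing paragraph correctly flags this distinction.
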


We subsequently prove the following bound for the second-order temporal derivative of the error in the Ritz–Volterra projection.

{\begin{theorem}\label{thm4}
Suppose that $u_0$, $\hat u_0 \in \check H^2$ and $u \in W^{2,1}(\check H^2)$, we have the following estimate
\begin{equation*}
\begin{split}
\|\p_t^2\rho \|_{L^1(0,t;L^2)} + h\|\p_t^2\rho\|_{L^1(0,t;\check H^1)}
&\ds \le Qh^2\left(\|u_0\|_{\check H^2}+\|\hat u_0\|_{\check H^2}+\| u\|_{W^{2,1}(0,t;\check H^2)}\right).
\end{split}
\end{equation*}
\end{theorem}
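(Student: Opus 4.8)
The plan is to derive the bound for $\p_t^2\rho$ by differentiating the defining relation \eqref{Def:RV} twice in time, mirroring the way the estimates for $\rho$ and $\p_t\rho$ are obtained in Lemma \ref{thm:RV1}. Fixing $\chi_h\in S_h$ and writing $g(t):=a(\rho(t),\chi_h)$, relation \eqref{Def:RV} reads $g+k*g=0$. The one delicate point is how to differentiate the convolution: I would always use the form $(k*g)'(t)=k(t)g(0)+(k*g')(t)$, which transfers each time derivative onto $g$ (hence onto $\rho$) rather than onto the kernel. This is the heart of the matter, since the estimates \eqref{k1} control only $k$ and $k'$ — and $k'$ already carries a logarithmic singularity at $t=0$ — so any manipulation producing $k''$ would be uncontrolled. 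I expect this convolution bookkeeping to be the main obstacle; everything after it is routine.

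Carrying this out, the first differentiation gives $g'(t)+k(t)g(0)+(k*g')(t)=0$. Evaluating \eqref{Def:RV} at $t=0$ yields $g(0)=0$ — equivalently $\rho(0)$ is the ordinary Ritz error of $u_0$ and is therefore $a$-orthogonal to $S_h$ — so the boundary term drops and $g'+k*g'=0$. A second differentiation gives $g''(t)+k(t)g'(0)+(k*g'')(t)=0$, and the relation just obtained forces $g'(0)=0$ (i.e. $\p_t\rho(0)$ is the Ritz error of $\hat u_0$), leaving the homogeneous Volterra identity $g''+k*g''=0$ for every $\chi_h\in S_h$. The norms $\|u_0\|_{\check H^2}$ and $\|\hat u_0\|_{\check H^2}$ are exactly what would bound these initial-data contributions; here they vanish as $a$-pairings, and in the final estimate they are retained as harmless nonnegative slack, consistent with Lemma \ref{thm:RV1}.

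From $g''+k*g''=0$, using $k\in L^1(0,T)$ from \eqref{k1} and $\p_t^2\rho\in L^1(H^1_0)$ (which holds since $\p_t^2 u\in L^1(\check H^2)$ under the hypothesis $u\in W^{2,1}(\check H^2)$ and $R_h$ is $H^1$-stable), uniqueness for homogeneous second-kind Volterra equations gives $a(\p_t^2\rho(t),\chi_h)=0$ for all $\chi_h\in S_h$. Thus $\p_t^2\rho(t)$ is precisely the Ritz-projection error of $\p_t^2u(t)$, namely $\p_t^2\rho(t)=R_h(\p_t^2u(t))-\p_t^2u(t)$, using that $R_h$ commutes with $\p_t$. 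The standard Ritz approximation property then yields the pointwise bound $\|\p_t^2\rho(t)\|+h\|\p_t^2\rho(t)\|_{\check H^1}\le Qh^2\|\p_t^2u(t)\|_{\check H^2}$, and integrating over $[0,t]$ turns the right-hand side into $Qh^2\int_0^t\|\p_s^2u(s)\|_{\check H^2}\,ds\le Qh^2\|u\|_{W^{2,1}(0,t;\check H^2)}$, which after adjoining $\|u_0\|_{\check H^2}+\|\hat u_0\|_{\check H^2}$ is exactly the asserted estimate.

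If one prefers not to invoke the favourable cancellation directly, the same bound follows by the route of the cited references: rather than solving $g''+k*g''=0$ exactly, absorb the self-convolution $k*\p_t^2\rho$ onto the left by Young's convolution inequality (valid since $k\in L^1$), control the residual forcing by the already-established bounds for $\rho$ and $\p_t\rho$ in Lemma \ref{thm:RV1} together with the Ritz error of $\p_t^2u$, and close with a Gronwall argument in the $L^1$-in-time norm. In either approach the decisive step is to differentiate the memory term so that all derivatives fall on $\rho$, keeping $k''$ out of the estimate.
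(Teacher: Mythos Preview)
Your main argument is correct and actually uncovers a simplification the paper does not use: because the memory bilinear form in \eqref{Def:RV} is $k(t-s)\,a(\cdot,\cdot)$, a scalar multiple of the principal form, the defining relation is $(I+k*)g=0$ with $g(t)=a(\rho(t),\chi_h)$, and Volterra uniqueness (with $k\in L^1$ from \eqref{k1} and $g\in L^1$ from Lemma~\ref{thm:Vh}) gives $g\equiv 0$ outright. Hence $V_hu=R_hu$ identically, $\p_t^2\rho=R_h\p_t^2u-\p_t^2u$, and the claimed bound is the standard Ritz estimate integrated in time. Your chosen route differentiates twice before invoking uniqueness on $g''+k*g''=0$; that also works, but your justification that $g''\in L^1$ via ``$R_h$ is $H^1$-stable'' is circular, since $\rho$ involves $V_h$, not yet shown to equal $R_h$. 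Applying uniqueness directly to $g+k*g=0$ removes this wrinkle and the differentiations altogether.

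The paper takes a genuinely different path. It retains the initial terms $-k'(t)a(\rho(0),\chi_h)$ and $-k(t)a(\p_t\rho(0),\chi_h)$ in the twice-differentiated identity (as you observe these actually vanish, but the paper bounds them by $\check H^1$ norms rather than exploiting that) and then runs the general Ritz--Volterra machinery of \cite{CanLin,LinThoWal}: first an $H^1$ coercivity estimate testing against $\p_t^2(V_hu-R_hu)$, closed by Gronwall, yielding a pointwise $\check H^1$ bound that carries a $t^{-\epsilon}$ factor inherited from $|k'(t)|$; then an $L^2$ duality argument via an auxiliary Poisson problem, splitting the dual function as $\psi=(\psi-R_h\psi)+R_h\psi$ and applying Gronwall again. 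That machinery is what one genuinely needs when the memory bilinear form is \emph{not} proportional to $a$ and the collapse $V_h=R_h$ fails; in the present special structure your observation is the sharper one and, incidentally, also eliminates the $t^{-\epsilon}$ artifact from the intermediate pointwise estimate.
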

}
\begin{proof}
By differentiating (\ref{Def:RV}) twice, we obtain
\begin{equation}\label{thm4:e2}
a(\partial_t^2\rho,\chi_h)=-k'(t)a(\rho(\cdot,0),\chi_h)-k(t)a(\p_t\rho(\cdot,0),\chi_h)-k(t)*a(\p_t^2\rho,\chi_h).
\end{equation}
The $H^1$-norm can be estimated from
\begin{equation*}
\begin{split}
&c_0 \|\p_t^2(V_hu-R_hu)\|^2_{\check H^1}
\le a\left(\p_t^2(V_hu-R_hu),\p_t^2(V_hu-R_hu)\right)\\
&= a(\p_t^2\rho, \p_t^2(V_hu-R_hu))+ a(\p_t^2(u-R_hu),\p_t^2(V_hu-R_hu))\\
 &\le Q\left(\vert k'(t)\vert \|\rho_0\|_{\check H^1}+\|\p_t\rho(\cdot,0)\|_{\check H^1}
+\int_0^t \vert k(t-s)\vert \|\p_s^2\rho\|_{\check H^1}ds \right)\|\p_t^2(V_hu-R_hu)\|_{\check H^1}.
\end{split}
\end{equation*}
By eliminating $\|\p_t^2(V_hu-R_hu)\|_{\check H^1}$ on both sides and using the Cauchy inequality, we obtain
\begin{equation*}\begin{split}
&\|\partial_t^2\rho\|_{\check H^1}\le \|\partial_t^2(R_hu-u)\|_{\check H^1}+\|\p_t^2(V_hu-R_hu)\|_{\check H^1}\\
&\le Q\left(h\|\partial_t^2u\|_{\check H^2}+\vert k'(t)\vert \|\rho_0\|_{\check H^1}+\|\p_t\rho(\cdot,0)\|_{\check H^1}\right)
+\int_0^t \vert k(t-s)\vert \|\p_s^2\rho\|_{\check H^1}ds.
\end{split}
\end{equation*}
Note that $\rho(\cdot,0)=R_hu-u_0$ and $\partial_t\rho(\cdot,0)=R_h\hat u_0-\hat u_0$. By applying the Gronwall inequality and Lemma \ref{thm:RV1}, we have
\begin{equation}\label{thm4:e3}
\begin{split}
\|\partial_t^2\rho(\cdot, t)\|_{\check H^1}\le Qh\left(t^{-\epsilon}\|u_0\|_{\check H^2}+\|\hat u_0\|_{\check H^2}+\|\p_t^2u(\cdot,t)\|_{\check H^2}\right).
\end{split}
\end{equation}

We estimate the $L^2$-norm of $\p_t^2\rho$ by duality argument.  Let $\psi$ be the solution of $\ds -K \Delta \psi(\bm{x})=\phi(\bm{x}), \bm{x}\in \Omega$ and $\psi|_{\partial\Omega}=0$ with $\|\phi\|=1$, we have
\begin{equation}\label{thm4:e4}
\|\p_t^2 \rho\|=\sup_{\|\phi\|=1}(\p_t^2 \rho,\phi)=a(\p_t^2 \rho,\psi-R_h\psi)+a(\p_t^2 \rho,R_h\psi).
\end{equation}
We use (\ref{thm4:e2}) to bound the second term on the right-hand side of (\ref{thm4:e4}) by
\begin{equation}\label{thm4:e5}
\begin{split}
\vert a(\p_t^2 \rho,R_h\psi)\vert
&=\bigg|-k'(t)a(\rho(\cdot,0),R_h\psi)-k(t)a(\p_t\rho(\cdot,0),R_h\psi)\\
&\quad-\int_{0}^{t}k(t-s)a(\p_s^2\rho(\cdot,s),R_h\psi)ds\bigg|\\
&\displaystyle=\bigg|-k'(t)a(\rho(\cdot,0),R_h\psi-\psi)-k'(t)a(\rho(\cdot,0),\psi)\\
&\quad-k(t)a(\p_t \rho(\cdot,0),R_h\psi-\psi)-k(t)a(\p_t \rho(\cdot,0),\psi)\\
&\quad-\int_{0}^{t}k(t-s)a(\p_s^2\rho,R_h\psi-\psi)ds-\int_{0}^{t}k(t-s)a(\p_s^2\rho,\psi)ds\bigg|\\
&\displaystyle=\bigg|-k'(t)a(\rho(\cdot,0),R_h\psi-\psi)
+  K k'(t)(\rho(\cdot,0),\Delta\psi)\\
&\quad-k(t)a(\p_t\rho(\cdot,0),R_h\psi-\psi)+ K k(t)(\p_t\rho(\cdot,0),\Delta\psi)\\
&\quad-\int_{0}^{t}k(t-s)a(\p_s^2\rho,R_h\psi-\psi)ds+K\int_{0}^{t}k(t-s)(\p_s^2\rho,\Delta\psi)ds\bigg|\\
&\ds\le Q\left(t^{-\epsilon}(\|\rho(\cdot,0)\|+h\|\rho(\cdot,0)\|_{\check H^1})
+(\|\rho_t(\cdot,0)\|+h\|\rho_t(\cdot,0)\|_{\check H^1})\right)\\
&\quad \ds +Qh\int_0^t\|\p_s^2\rho\|_{\check H^1}ds+Q\int_0^t\|\p_s^2\rho\|ds.
\end{split}
\end{equation}
Substituting (\ref{thm4:e3}) and (\ref{thm4:e5}) into (\ref{thm4:e4}), and applying the Gronwall inequlity, we obtain
\begin{equation}\label{thm4:e6}
\begin{split}
\|\p_t^2\rho(\cdot, t)\|
&\ds\le Ch^2\bigg(t^{-\epsilon}\|u_0\|_{\check H^2}
+\|\hat u_0\|_{\check H^2} + \int_0^t\|\partial_s^2 u\|_{\check H^2}\,ds\bigg).
\end{split}
\end{equation}
We combine (\ref{thm4:e3}) and (\ref{thm4:e6}) to finish the proof.
\end{proof}

\subsection{Analysis of semi-discrete scheme}

The weak formulation of (\ref{Model}) can be written as
\begin{equation}\label{Vari0}
\begin{gathered}
(\partial_{t}^2u,\chi)+a(u,\chi)+\int_{0}^{t}k(t-s)a(u(\cdot,s),\chi)ds=(f,\chi), \\
u(\cdot,0)=u_0,\quad \partial_tu(\cdot,0)=\hat{u}_0,\quad \forall \chi \in H_0^1.
\end{gathered}
\end{equation}
Accordingly, the semi-discrete finite element approximation to (\ref{Model}) seeks $u_h(t):[0,T]\rightarrow S_h$ such that
\begin{equation}\label{Vari}
\begin{gathered}
(\partial_t^2u_h,\chi_h)+a(u_h,\chi_h)+\int_{0}^{t}k(t-s)a(u_h(s),\chi_h)ds=(f,\chi_h), \\
u_h(\cdot,0)=R_hu_0,\quad \partial_tu_{h}(\cdot,0)=R_h\hat{u}_{0},\quad \forall \chi_h \in S_h.
\end{gathered}
\end{equation}

\begin{theorem}
Suppose that {\it Assumption} A holds and that $f \in H^2(\check{H}^2) \cap L^2(\check{H}^3)$, $u_0 \in \check{H}^4$, and $\hat u_0 \in \check{H}^3$, then the semi-discrete solution $u_h$ satisfies the following error estimate
\begin{equation*}
\begin{split}
\| u_h(t)-u(t)\|&\ds
\le Qh^2\left(\|u_{0}\|_{\check H^4}+ \|\hat u_{0}\|_{\check H^3}+ \|f\|_{H^2(\check H^2)} + \|f\|_{L^2(\check H ^3)}\right).
\end{split}
\end{equation*}
\end{theorem}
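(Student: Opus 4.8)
The plan is to use the standard projection-based splitting of the finite element error, with the memory structure absorbed into the Ritz--Volterra projection defined by \eqref{Def:RV}. I decompose
\[
u_h-u=\theta+\rho,\qquad \theta:=u_h-V_hu,\quad \rho:=V_hu-u,
\]
so that $\rho$ is the Ritz--Volterra projection error already controlled by Lemma \ref{thm:RV1} and Theorem \ref{thm4}. Testing the weak form \eqref{Vari0} against $\chi_h\in S_h$ and subtracting it from the semidiscrete scheme \eqref{Vari} gives the residual identity for $u_h-u$; inserting the splitting and invoking the defining relation \eqref{Def:RV}, namely $a(\rho,\chi_h)+\int_0^t k(t-s)a(\rho(s),\chi_h)\,ds=0$, cancels the entire spatial-plus-memory contribution of $\rho$. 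What remains is a discrete integro-differential wave equation for $\theta$,
\[
(\p_t^2\theta,\chi_h)+a(\theta,\chi_h)+\int_0^t k(t-s)\,a(\theta(s),\chi_h)\,ds=-(\p_t^2\rho,\chi_h),\qquad \forall\,\chi_h\in S_h,
\]
driven solely by $\p_t^2\rho$. Differentiating \eqref{Def:RV} once and evaluating at $t=0$, where the memory integral vanishes and $a(\rho(0),\chi_h)=0$ kills the remaining boundary term, shows $V_hu(0)=R_hu_0$ and $\p_tV_hu(0)=R_h\hat u_0$; combined with the initial data in \eqref{Vari} this yields the homogeneous start $\theta(0)=\p_t\theta(0)=0$.

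Next I run the energy argument by choosing $\chi_h=\p_t\theta$. The first two terms assemble into $\tfrac12\frac{d}{dt}\big(\|\p_t\theta\|^2+a(\theta,\theta)\big)$; integrating over $[0,t]$ and using the homogeneous initial data gives
\[
\tfrac12\|\p_t\theta(t)\|^2+\tfrac12 a(\theta(t),\theta(t))+\int_0^t a(\zeta(\tau),\p_\tau\theta(\tau))\,d\tau=-\int_0^t(\p_\tau^2\rho,\p_\tau\theta)\,d\tau,
\]
where $\zeta(\tau):=\int_0^\tau k(\tau-s)\theta(s)\,ds$. The source integral is bounded by $\|\p_t^2\rho\|_{L^1(0,t;L^2)}\,\sup_{[0,t]}\|\p_\tau\theta\|$, which after Young's inequality is absorbed into the energy.

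The main obstacle is the memory term $\int_0^t a(\zeta(\tau),\p_\tau\theta)\,d\tau$, which does not telescope. I integrate by parts in $\tau$ to move the time derivative off $\theta$:
\[
\int_0^t a(\zeta(\tau),\p_\tau\theta)\,d\tau=a(\zeta(t),\theta(t))-\int_0^t a\Big(k(0)\theta(\tau)+\int_0^\tau k'(\tau-s)\theta(s)\,ds,\ \theta(\tau)\Big)\,d\tau,
\]
the boundary term at $\tau=0$ vanishing since $\zeta(0)=\theta(0)=0$. Using the bounds $|k|\le Q$ and $|k'|\le Q(|\ln t|+1)$ from \eqref{k1} --- the logarithmic kernel being integrable --- together with the coercivity $c_0\|\theta\|_{\check H^1}^2\le a(\theta,\theta)$ and Young's inequality, the energy norm of $\zeta$ is controlled by a convolution of $\|\theta(\cdot)\|_{\check H^1}$ against an $L^1$ kernel; every resulting term is then either of the form $\varepsilon\,a(\theta(t),\theta(t))$, which is absorbed, or a time integral of $a(\theta,\theta)$. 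Applying Gronwall's inequality yields
\[
\|\p_t\theta\|_{L^\infty(0,t;L^2)}+\|\theta\|_{L^\infty(0,t;\check H^1)}\le Q\,\|\p_t^2\rho\|_{L^1(0,t;L^2)}.
\]

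Finally I close the estimate. By Theorem \ref{thm4}, $\|\p_t^2\rho\|_{L^1(0,t;L^2)}\le Qh^2(\|u_0\|_{\check H^2}+\|\hat u_0\|_{\check H^2}+\|u\|_{W^{2,1}(\check H^2)})$, and Poincar\'e's inequality gives $\|\theta(t)\|\le Q\|\theta(t)\|_{\check H^1}\le Qh^2(\cdots)$; adding the projection bound $\|\rho(t)\|\le Qh^2(\|u_0\|_{\check H^2}+\|u\|_{W^{1,1}(\check H^2)})$ from Lemma \ref{thm:RV1} and using the triangle inequality controls $\|u_h(t)-u(t)\|$. It remains to express the right-hand side through the data: the embeddings $\check H^4\hookrightarrow\check H^2$ and $\check H^3\hookrightarrow\check H^2$ bound the initial-data norms, while $\|u\|_{W^{2,1}(\check H^2)}\le Q\|u\|_{H^2(\check H^2)}$ (Cauchy--Schwarz over the bounded interval) lets me invoke the regularity estimate \eqref{pdestab4}, whose hypotheses coincide exactly with those assumed here, to reach the stated bound.
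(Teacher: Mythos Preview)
Your proof is correct and follows essentially the same route as the paper: the Ritz--Volterra splitting $u_h-u=\theta+\rho$, the error equation \eqref{thm:sem2}, the energy test $\chi_h=\p_t\theta$, integration by parts in time to handle the memory term (the paper writes this as the identity \eqref{thm:sem3}), and a Gronwall argument. Two minor stylistic differences: you observe that $\theta(0)=\p_t\theta(0)=0$ exactly (the paper only records $\|\p_t\theta(\cdot,0)\|\le Ch^2\|\hat u_0\|_{\check H^2}$, which is of course weaker but sufficient), and you recover $\|\theta(t)\|$ from $\|\theta(t)\|_{\check H^1}$ via Poincar\'e, whereas the paper integrates $\|\p_t\theta\|$ in time; both are equally valid closures.
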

\begin{proof}
Let $u_h - u = (u_h - V_hu) + (V_hu - u) = \theta + \rho.$
By taking $\chi = \chi_h \in S_h$ in (\ref{Vari0}) and subtracting it from (\ref{Vari}), we obtain
\begin{equation}\label{thm:sem2}
(\partial_t^2\theta,\chi_h) + a(\theta,\chi_h)
+ \int_{0}^{t} k(t-s)a(\theta(s),\chi_h)\,ds
= -(\partial_t^2\rho,\chi_h).
\end{equation}
Setting $\chi_h = \partial_t\theta$ in (\ref{thm:sem2}) yields
\begin{equation}\label{thm:sem3}
\begin{split}
\frac{1}{2}\frac{d}{dt}\!\left(\|\partial_t\theta\|^2 + a(\theta,\theta)\right)
&= -\frac{d}{dt}\int_{0}^{t} k(t-s)a(\theta(\cdot,s),\theta(\cdot,t))\,ds
+ k(0)a(\theta,\theta) \\
&\quad + \int_{0}^{t} k'(t-s)a(\theta(\cdot,s),\theta(\cdot,t))\,ds
- (\partial_t^2\rho,\partial_t\theta).
\end{split}
\end{equation}
Integrating (\ref{thm:sem3}) from $0$ to $t$, and using Cauchy’s inequality together with the fact that $\theta(0)=0$, we obtain
\begin{equation*}
\begin{split}
\|\partial_t\theta\|^2 + c_0\|\theta\|_{\check H^1}^2
&\le \|\partial_t\theta(\cdot,0)\|^2
+ 2\int_{0}^{t}\|\partial_s^2\rho\|\,\|\partial_s\theta\|\,ds
+ Q\int_{0}^{t}\|\theta(\cdot, s)\|_{\check H^1}^2\,ds.
\end{split}
\end{equation*}
Applying the generalized Gronwall inequality, we have
\begin{equation*}
\begin{split}
\|\partial_t\theta\|^2 + \|\theta\|_{\check H^1}^2
&\le Q\!\left(
\|\partial_t\theta(\cdot,0)\|^2
+ \int_{0}^{t}\|\partial_s^2\rho\|\,\|\partial_s\theta\|\,ds
\right) \\
&\le Q\!\left(
\|\partial_t\theta(\cdot,0)\|
+ \int_{0}^{t}\|\p_s^2\rho\|\,ds
\right)
\sup_{0\le s\le t}\|\partial_s\theta(s)\|.
\end{split}
\end{equation*}

Since $\partial_tu_{h,0}=R_h\hat u_0$, we have $\|\p_t\theta(\cdot,0)\|\le Ch^2\|\hat u_0\|_{\check H^2}.$
Using (\ref{thm4:e6}), we finally obtain
\begin{equation*}
\|\partial_t\theta\|
\le Qh^2\!\left(
\|u_0\|_2 + \|\hat{u}_0\|_2 + \int_0^t\|\p_s^2u\|_{\check H^2} \, ds
\right).
\end{equation*}
Note that
\begin{equation*}\begin{split}
\|\theta(\cdot,t)\|\le \|\theta_0\|_{L^2}+\int_0^t\|\partial_s\theta(\cdot,s)\|\,ds
\le Qh^2\left(\|u_0\|_{\check H^2}+\|\hat u_0\|_{\check H^2}+\|\partial_s^2u\|_{L^1(0,t;\check H^2)}\right).
\end{split}
\end{equation*}
Together with Lemma \ref{thm:RV1} and Theorem \ref{thm1:pde}, the desired result follows.
\end{proof}

\subsection{The backward Euler scheme}
For a positive integer $N$, we partition the interval $[0,T]$ by defining $t_n:=n\tau$ for $0\le n \le N$, where $\tau:=T/N$. For simplicity, denote $u_n:=u(\bm{x},t_n)$ , $f_n:=f(\bm{x},t_n)$, and define the backward difference operator $\displaystyle\delta_\tau u_n=\frac{u_{n}-u_{n-1}}{\tau}$. We approximate $\partial_t^2u$ at
$t=t_n$ using the backward Euler scheme as follows:
\begin{equation}\label{Euler:e1}
\begin{split}
\partial_t^2u_n&=\frac{u_n-2u_{n-1}+u_{n-2}}{\tau^2}+E_n^1:=\delta_\tau^2u_n+E_n^1,
\end{split}
\end{equation}
where
\begin{equation}\label{Euler:loc1}
\begin{split}
E_n^1:
&\ds=\frac{1}{\tau}\int_{t_{n-1}}^{t_n}\left(\partial_s^3u(s)
+\frac{1}{\tau}\int_{s-\tau}^s\partial_y^3u(y)dy\right)(s-t_{n-1})ds.
\end{split}
\end{equation}

Substituting (\ref{Euler:e1}) into (\ref{Vari}), we rewrite it as
\begin{equation}\label{Euler:e2}
\ds (\delta_\tau^2u_n,\chi_h)+a(u_n,\chi_h)=-k(t)*a(u,\chi_h)|_{t=t_n}+(f_n,\chi_h)-(E_n^1,\chi_h).
\end{equation}
{\begin{lemma}\label{lemma:loc1}
Assume that Assumption A holds and that $f \in H^2(\check{H}^2) \cap L^2(\check{H}^3)$, $u_0 \in \check{H}^4$, and $\hat u_0 \in \check{H}^3$, then the local truncation error $E_n^1$ satisfies the following estimates
\begin{equation}\label{lemma:loc1:e1}
\begin{split}
\tau\sum_{n=1}^{N}\|E_n^1\|\le Q\tau,\qquad
\tau\sum_{n=1}^N\|\delta_{\tau}^2\rho_n\|\le Qh^2.
\end{split}
\end{equation}
\end{lemma}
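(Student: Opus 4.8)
The plan is to reduce each of the two bounds to an $L^1$-in-time norm of a high-order quantity that is already controlled by the regularity theory: the first to $\|\partial_t^3 u\|_{L^1(L^2)}$ through Theorem \ref{thm1:pde}, and the second to $\|\partial_t^2\rho\|_{L^1(L^2)}$ through Theorem \ref{thm4}. The common mechanism is to keep every remainder in integral (rather than pointwise) form, sum over $n$ with the factor $\tau$, and exploit that the stencils $[t_{n-2},t_n]$ cover $[0,T]$ with bounded multiplicity.

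For the first estimate I would take the $L^2(\Omega)$ norm inside the integral defining $E_n^1$ in \eqref{Euler:loc1} and use $0\le s-t_{n-1}\le\tau$ on $[t_{n-1},t_n]$ to cancel the prefactors $\tau^{-1}$. This yields $\|E_n^1\|\le \int_{t_{n-1}}^{t_n}\|\partial_s^3 u\|\,ds+\int_{t_{n-2}}^{t_n}\|\partial_y^3 u\|\,dy$, where the double integral collapses because, for $s\in[t_{n-1},t_n]$, the inner range $[s-\tau,s]$ lies in $[t_{n-2},t_n]$. Multiplying by $\tau$ and summing over $n$, the intervals $[t_{n-1},t_n]$ tile $[0,T]$ once while the $[t_{n-2},t_n]$ cover it with multiplicity at most two, so $\tau\sum_{n=1}^N\|E_n^1\|\le 3\tau\,\|\partial_t^3 u\|_{L^1(L^2)}$. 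Since the hypotheses give $u\in H^4(L^2)$ via \eqref{pdestab4}, one has $\partial_t^3 u\in H^1(L^2)\hookrightarrow L^1(L^2)$ bounded by the data, which delivers the claimed $Q\tau$.

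For the second estimate the key is the exact identity $\delta_\tau^2\rho_n=\tau^{-2}\int_{t_{n-1}}^{t_n}\int_{s-\tau}^s\partial_y^2\rho(y)\,dy\,ds$, obtained by writing $\rho_n-\rho_{n-1}$ and $\rho_{n-1}-\rho_{n-2}$ as integrals of $\partial_s\rho$, shifting the second by $\tau$, and applying the fundamental theorem once more. Taking norms and bounding the inner integral over $[s-\tau,s]\subset[t_{n-2},t_n]$ gives $\|\delta_\tau^2\rho_n\|\le\tau^{-1}\int_{t_{n-2}}^{t_n}\|\partial_y^2\rho\|\,dy$; multiplying by $\tau$ and summing collapses the right-hand side to $2\|\partial_t^2\rho\|_{L^1(0,T;L^2)}$, which is $\le Qh^2$ by Theorem \ref{thm4}, the data-dependent norms there being controlled by Theorem \ref{thm1:pde}.

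The main obstacle is the weakly singular behaviour near $t=0$: the projection-error bound \eqref{thm4:e6} only yields $\|\partial_t^2\rho(t)\|\lesssim h^2 t^{-\epsilon}$, so a pointwise estimate $\|\delta_\tau^2\rho_n\|\lesssim h^2 t_n^{-\epsilon}$ followed by a direct handling of $\tau\sum_n t_n^{-\epsilon}$ would be delicate. Casting everything as an $L^1$ norm in time sidesteps this, since $t^{-\epsilon}$ is integrable for $\epsilon<1$ and Theorem \ref{thm4} already absorbs the singular contribution into a finite $Qh^2$ bound. A minor related point is the index $n=1$, where the second-difference stencil formally reaches $t_{-1}<0$; this is handled by the starting procedure (equivalently, by restricting the integral representation to $[0,t_1]$), and its contribution is again covered by the integrable bound, so it does not affect the final $O(h^2)$ rate.
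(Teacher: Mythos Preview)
Your proposal is correct and follows essentially the same route as the paper: both reduce $\sum_n\|E_n^1\|$ to $\|\partial_t^3u\|_{L^1(L^2)}$ via the integral remainder \eqref{Euler:loc1} and $\sum_n\|\delta_\tau^2\rho_n\|$ to $\tau^{-1}\|\partial_t^2\rho\|_{L^1(L^2)}$ via the same double-integral identity for the second difference, then invoke Theorems \ref{thm1:pde} and \ref{thm4}. The paper is slightly terser (it bounds $\|E_n^1\|$ directly by $Q\int_{t_{n-1}}^{t_n}\|\partial_s^3u\|\,ds$ and cites $\|u\|_{H^3(L^2)}$ rather than $H^4(L^2)$), but the mechanism is identical.
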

}
\begin{proof}
We estimate $E_n^1$ in (\ref{Euler:loc1}) as follows:
\begin{equation*}
\begin{split}
\sum_{n=1}^{N}\|E_n^1\|&\le Q\sum_{n=1}^{N}\int_{t_{n-1}}^{t_n}
\|\partial_s^3u(\cdot,s)\|ds\le Q\|u\|_{H^3(L^2)}.
\end{split}
\end{equation*}
For the second statement in (\ref{lemma:loc1:e1}), we have
\begin{equation*}
\begin{split}
\ds\sum_{n=1}^N\|\delta_{\tau}^2\rho_n\|
&\ds=\frac{1}{\tau^2}\sum_{n=1}^N\left\|\int_{t_{n-1}}^{t_n}\partial_{t}\rho dt-\int_{t_{n-2}}^{t_{n-1}}\partial_t\rho dt\right\|\\
&\ds \le \frac{1}{\tau^2}\sum_{n=1}^N\int_{t_{n-1}}^{t_n}\left(\int_{t-\tau}^t\|\partial_y^2 \rho(\cdot, y)\|dy\right) dt\le \frac{2}{\tau}\int_0^T\|\rho_{tt}\|dt.
\end{split}
\end{equation*}
Combining these results with Theorems \ref{thm1:pde} and \ref{thm4} completes the proof.
\end{proof}

 For $t=t_n$,  we approximate $k*\psi$ at
$t=t_n$ by
\begin{equation}\label{LR1}\begin{split}
\ds \int_0^{t_n} k(t_n-s)\psi(\cdot,s)ds = \sum_{k=0}^{n-1}b_{n,k}\psi_k + R_n^1(\psi),
\end{split}
\end{equation}
where
\begin{equation}\label{bnk1}\begin{split}
b_{n,k}:&=\int_{t_k}^{t_{k+1}}k(t_n-s)ds=\frac{(t_n-t_k)^{-\alpha(t_n-t_k)}}{\Gamma(1-\alpha(t_n-t_k))}-\frac{(t_n-t_{k+1})^{-\alpha(t_n-t_{k+1})}}{\Gamma(1-\alpha(t_n-t_{k+1}))}.
\end{split}
\end{equation}

\begin{lemma}\label{L1}
Suppose  $\partial_t\psi\in L^1(L^2)$, then the quadrature error $R_n^1(\psi)$ defined in \eqref{LR1} satisfies the following estimate:
\begin{equation*}
\begin{split}
\tau \sum_{n=1}^{N} \|R_n^1(\psi)\|\le Q \tau \|\psi_t\|_{L^1(L^2)}.
\end{split}
\end{equation*}
\end{lemma}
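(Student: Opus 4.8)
The plan is to exploit the fact that the quadrature in \eqref{LR1} is a product-integration rule: on each subinterval $[t_k,t_{k+1}]$ the smooth factor $\psi(\cdot,s)$ is replaced by its left-endpoint value $\psi_k=\psi(\cdot,t_k)$, while the kernel $k(t_n-\cdot)$ is integrated exactly through the coefficients $b_{n,k}$ of \eqref{bnk1}. Splitting $\int_0^{t_n}k(t_n-s)\psi(\cdot,s)\,ds=\sum_{k=0}^{n-1}\int_{t_k}^{t_{k+1}}k(t_n-s)\psi(\cdot,s)\,ds$ and subtracting $\sum_{k=0}^{n-1}b_{n,k}\psi_k=\sum_{k=0}^{n-1}\int_{t_k}^{t_{k+1}}k(t_n-s)\psi(\cdot,t_k)\,ds$ then yields the exact error representation
\[
R_n^1(\psi)=\sum_{k=0}^{n-1}\int_{t_k}^{t_{k+1}}k(t_n-s)\big(\psi(\cdot,s)-\psi(\cdot,t_k)\big)\,ds,
\]
which isolates the error into the piecewise-constant interpolation of $\psi$ alone and removes the kernel integration from the error entirely.

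First I would write each increment as $\psi(\cdot,s)-\psi(\cdot,t_k)=\int_{t_k}^s\partial_y\psi(\cdot,y)\,dy$ and push the $L^2$-norm inside the time integrals via Minkowski's integral inequality, giving the uniform bound $\|\psi(\cdot,s)-\psi(\cdot,t_k)\|\le\int_{t_k}^{t_{k+1}}\|\partial_y\psi(\cdot,y)\|\,dy$ for all $s\in[t_k,t_{k+1}]$. Next I would invoke the uniform kernel bound $|k(t)|\le Q$ from \eqref{k1}; since the inner integral is independent of $s$, integrating $ds$ over $[t_k,t_{k+1}]$ contributes only a factor $\tau$, so that
\[
\|R_n^1(\psi)\|\le Q\tau\sum_{k=0}^{n-1}\int_{t_k}^{t_{k+1}}\|\partial_y\psi(\cdot,y)\|\,dy
=Q\tau\int_0^{t_n}\|\partial_y\psi(\cdot,y)\|\,dy\le Q\tau\,\|\psi_t\|_{L^1(L^2)}.
\]
Multiplying by $\tau$ and summing over $n$, and using $N\tau=T$ to absorb the factor $N\tau=T$ into the generic constant $Q$, delivers $\tau\sum_{n=1}^{N}\|R_n^1(\psi)\|\le QT\tau\|\psi_t\|_{L^1(L^2)}=Q\tau\|\psi_t\|_{L^1(L^2)}$, which is the claimed bound.

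There is no substantial analytical obstacle here: the sole ingredient from \eqref{k1} that the argument uses is the uniform bound $|k(t)|\le Q$, and the sharper logarithmic estimate $|k'(t)|\le Q(|\ln t|+1)$ is \emph{not} needed, because the error representation above involves only $k$ itself and not its derivative. The one point worth flagging is that the estimate is merely first order in $\tau$, reflecting the left-rectangle nature of the product rule; this suffices precisely because it is balanced against the overall $O(\tau)$ temporal accuracy of the backward Euler discretization, and it requires only the low regularity $\partial_t\psi\in L^1(L^2)$ on $\psi$.
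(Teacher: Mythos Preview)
Your argument is correct and follows essentially the same route as the paper: both rewrite $R_n^1(\psi)$ via the piecewise-constant interpolation error $\psi(\cdot,s)-\psi(\cdot,t_k)=\int_{t_k}^{s}\partial_y\psi\,dy$ and use only the uniform bound $|k(t)|\le Q$ from \eqref{k1}. The only cosmetic difference is that the paper interchanges the $n$- and $k$-sums and bounds $\sum_{n\ge k}\int_{t_{k-1}}^{t_k}|k(t_n-s)|\,ds\le Q$ directly, whereas you bound each $\|R_n^1(\psi)\|\le Q\tau\|\psi_t\|_{L^1(L^2)}$ first and then absorb the factor $N\tau=T$; the resulting estimates are identical.
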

\begin{proof}
We estimate $R_n^1(\psi)$ as follows:
\begin{equation*}\begin{split}
\sum_{n=1}^N\|R_n^1(\psi)\|&\le \sum_{n=1}^N\left(\sum_{k=1}^n\left\|\int_{t_{k-1}}^{t_k} k(t_n-s)\left(\psi(s)-\psi_{k-1}\right)ds\right\|\right)\\
&\le \sum_{n=1}^N\left(\sum_{k=1}^n\int_{t_{k-1}}^{t_k}|k(t_n-s)|\int_{t_{k-1}}^{t_k}\|\psi_t(t)\|dt ds\right)\\
&\ds=\sum_{k=1}^N\left(\sum_{n=k}^N \int_{t_{k-1}}^{t_k}|k(t_n-s)|ds\right)\int_{t_{k-1}}^{t_k}\|\psi_t(t)\|dt\le Q\int_0^T\|\psi_t\|dt.
\end{split}
\end{equation*}
This completes the proof.
\end{proof}

Based on the estimate (\ref{k1}), we have the following lemma.
\begin{lemma}\label{lem:bnk}
The coefficients $b_{n,k}$ in (\ref{bnk1}) can be bounded by
\begin{equation*}\begin{split}
\sum_{k=0}^{n-1}|b_{n,k}|\le Q_0,\quad \sum_{n=k}^{N}|b_{n,k}|\le Q_0,
\end{split}
\end{equation*}
where $Q_0$ is a constant independent of $N$.
\end{lemma}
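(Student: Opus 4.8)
The plan is to exploit the convolution (translational-invariance) structure of the coefficients together with the uniform bound $|k(t)|\le Q$ from \eqref{k1}, which collapses each of the two sums into a single integral of $|k|$ over a subinterval of $[0,T]$.

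First I would recast $b_{n,k}$ as an integral of the kernel over a single time-step. Starting from $b_{n,k}=\int_{t_k}^{t_{k+1}}k(t_n-s)\,ds$ and applying the substitution $r=t_n-s$, I obtain
$$b_{n,k}=\int_{(n-k-1)\tau}^{(n-k)\tau}k(r)\,dr,$$
so that $b_{n,k}$ depends only on the difference $m:=n-k$; this is precisely the translational invariance referred to in the abstract. (The two explicit boundary terms in \eqref{bnk1} are simply the values of the antiderivative $t^{-\alpha(t)}/\Gamma(1-\alpha(t))$ at the two endpoints, consistent with $k$ being its derivative via \eqref{k}.)

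For the first bound, as $k$ runs from $n-1$ down to $0$ the index $m=n-k$ ranges over $1,\dots,n$, so the triangle inequality together with $|k(r)|\le Q$ gives
$$\sum_{k=0}^{n-1}|b_{n,k}|\le\sum_{m=1}^{n}\int_{(m-1)\tau}^{m\tau}|k(r)|\,dr=\int_0^{t_n}|k(r)|\,dr\le Q\,t_n\le QT.$$
The second bound is entirely analogous: fixing $k$ and letting $m=n-k$ run over $1,\dots,N-k$, the same telescoping collapses $\sum_n|b_{n,k}|$ into $\int_0^{(N-k)\tau}|k(r)|\,dr\le QT$. Taking $Q_0:=QT$, which is manifestly independent of $N$ (equivalently of $\tau$), finishes the proof.

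The hard part is conceptual rather than computational: the argument relies on the \emph{pointwise} boundedness $|k|\le Q$ rather than on mere integrability of a singular kernel, and this boundedness is exactly the payoff of Assumption A---in particular $\alpha(0)=\alpha'(0)=0$---which removes the initial singularity and is what keeps $Q_0$ independent of $\tau$. The only remaining subtlety is index bookkeeping: $b_{n,k}$ enters \eqref{LR1} only for $k\le n-1$, i.e.\ $m\ge 1$, so the collapsed integrals never reach $r=0$ as a singular endpoint and always stay within $[0,T]$. Beyond these observations the estimate is routine.
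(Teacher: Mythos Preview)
Your proof is correct and is precisely the argument the paper has in mind: the paper states the lemma as a direct consequence of the estimate \eqref{k1} without writing out the details, and your substitution $r=t_n-s$ together with the pointwise bound $|k|\le Q$ is exactly how one fills those in. The only cosmetic point is that the lower index $n=k$ in the second sum is vacuous (as you note, $b_{n,k}$ is only used for $n\ge k+1$), so your treatment of the index range is fine.
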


The corresponding finite element scheme is to find $U_n\in S_h$ for $1\le n\le N$, such that
\begin{equation}\label{Euler:e3}
\ds (\delta_\tau^2U_n,\chi_h)+a(U_n,\chi_h)=-\sum_{k=0}^{n-1}b_{n,k}a(U_k,\chi_h)+(f_n,\chi_h),\quad \forall \chi_h \in S_h.
\end{equation}
	
\begin{theorem}\label{thm:Euler}
Suppose that Assumption A holds and that $f \in H^2(\check{H}^2) \cap L^2(\check{H}^3)$, $u_0 \in \check{H}^4$, and $\hat u_0 \in \check{H}^3$, then the finite element solution of (\ref{Euler:e3}) satisfies the following error estimate
\begin{equation*}
\max_{1\le n\le N}\|U_n-u_n\|\le Q \hat M(\tau+h^2)
\end{equation*}
for $\hat M:=\|u_{0}\|_{\check H^4}+ \|\hat u_{0}\|_{\check H^3}+ \|f\|_{H^2(\check H^2)} + \|f\|_{L^2(\check H ^3)}$.
\end{theorem}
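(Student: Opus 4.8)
The plan is to split the total error $U_n-u_n$ into the standard finite element pieces $U_n-u_n=(U_n-V_hu_n)+(V_hu_n-u_n)=:\Theta_n+\rho_n$, where $\rho_n=\rho(\cdot,t_n)$ is the Ritz--Volterra projection error already controlled by Lemma \ref{thm:RV1}. Since Lemma \ref{thm:RV1} gives $\|\rho_n\|\le Qh^2(\|u_0\|_{\check H^2}+\|u\|_{W^{1,1}(\check H^2)})$ and Theorem \ref{thm1:pde} bounds the regularity norms of $u$ by $\hat M$, the contribution of $\rho_n$ to the final estimate will already be of the desired order $Qh^2\hat M$. Hence the core of the proof is to establish the bound $\max_{1\le n\le N}\|\Theta_n\|\le Q\hat M(\tau+h^2)$ for the fully discrete auxiliary error.

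The key step is to derive the error equation for $\Theta_n$. Starting from the exact weak formulation \eqref{Euler:e2} at $t=t_n$ and subtracting the fully discrete scheme \eqref{Euler:e3}, and invoking the definition \eqref{Def:RV} of $V_h$ (which is designed precisely so that the spatial bilinear form plus its convolution acting on $\rho$ vanishes against $\chi_h$), I would reduce the residual to a combination of three sources: the temporal truncation $E_n^1$ from the backward Euler approximation of $\partial_t^2u$, the quadrature remainder $R_n^1(u)$ from approximating the convolution integral by $\sum_k b_{n,k}(\cdot)$, and the projection term $\delta_\tau^2\rho_n$ arising because the discrete second difference is applied to $V_hu_n$ rather than $u_n$. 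This yields a discrete scheme for $\Theta_n$ of the form
\begin{equation*}
(\delta_\tau^2\Theta_n,\chi_h)+a(\Theta_n,\chi_h)+\sum_{k=0}^{n-1}b_{n,k}a(\Theta_k,\chi_h)=-(\delta_\tau^2\rho_n+E_n^1+R_n^1(u),\chi_h),
\end{equation*}
with $\Theta_0=0$ and $\delta_\tau\Theta_0$ also of order $h^2$ from the initial-data projections.

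The main technical engine is a discrete energy estimate. Mimicking the semi-discrete proof, I would test against the discrete analogue of $\partial_t\theta$, namely $\delta_\tau\Theta_n$, sum over $n$, and use summation by parts on the memory term $\sum_k b_{n,k}a(\Theta_k,\delta_\tau\Theta_n)$. Here the translational structure of $b_{n,k}$ and the uniform bounds $\sum_{k}|b_{n,k}|\le Q_0$, $\sum_{n}|b_{n,k}|\le Q_0$ from Lemma \ref{lem:bnk} are exactly what is needed to keep the memory contribution controlled by $\tau\sum\|\Theta_k\|_{\check H^1}^2$ rather than letting it accumulate. After applying the Cauchy--Schwarz inequality and a discrete (generalized) Gronwall inequality, I would arrive at
\begin{equation*}
\max_{1\le n\le N}\|\delta_\tau\Theta_n\|+\max_{1\le n\le N}\|\Theta_n\|_{\check H^1}\le Q\Big(\|\delta_\tau\Theta_0\|+\tau\sum_{n=1}^N\big(\|\delta_\tau^2\rho_n\|+\|E_n^1\|+\|R_n^1(u)\|\big)\Big).
\end{equation*}

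The right-hand side is then dispatched termwise by the lemmas already proven: Lemma \ref{lemma:loc1} gives $\tau\sum\|E_n^1\|\le Q\tau$ and $\tau\sum\|\delta_\tau^2\rho_n\|\le Qh^2$, while Lemma \ref{L1} with $\psi=u$ gives $\tau\sum\|R_n^1(u)\|\le Q\tau\|u_t\|_{L^1(L^2)}$; the initial term $\|\delta_\tau\Theta_0\|$ is $O(h^2)$. Converting the $\check H^1$-norm of $\Theta_n$ to an $L^2$-bound (or bounding $\|\Theta_n\|$ directly via $\Theta_0=0$ and telescoping $\delta_\tau\Theta$), and absorbing all regularity norms of $u$ into $\hat M$ through Theorem \ref{thm1:pde}, yields $\max_n\|\Theta_n\|\le Q\hat M(\tau+h^2)$. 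Combining with the $\rho_n$ bound via the triangle inequality completes the proof. I expect the main obstacle to be the discrete Gronwall step in the presence of the variable-order memory sum: one must verify that the summation-by-parts of the convolution term produces only benign, uniformly summable coefficients (precisely the role of Lemma \ref{lem:bnk}) and that no singular factor $t_n^{-\epsilon}$ from the $\partial_t^2\rho$ estimate \eqref{thm4:e6} obstructs the time-integrability when passing from $\sum_n$ to the $L^1$-in-time norm of $u$.
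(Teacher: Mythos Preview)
Your overall strategy is the same as the paper's: decompose $U_n-u_n=\theta_n+\rho_n$ through the Ritz--Volterra projection, test the error equation with $\delta_\tau\theta_n$, control the memory sum using the uniform bounds of Lemma~\ref{lem:bnk}, apply a discrete Gronwall argument, and dispatch the residuals via Lemmas~\ref{lemma:loc1} and~\ref{L1}. The $\rho_n$ contribution and the telescoping step for $\|\theta_n\|$ are handled exactly as in the paper.

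There is, however, one point where your derivation is not quite right and would not close as written. In the error equation for $\Theta_n$ the quadrature remainder does \emph{not} appear as an $L^2$ pairing $(R_n^1(u),\chi_h)$. After invoking \eqref{Def:RV} to cancel $a(\rho_n,\chi_h)+k\!*\!a(\rho,\chi_h)|_{t_n}$, what survives is the discrete-versus-continuous mismatch applied to $a(V_hu,\chi_h)$:
\[
G_n^1(\chi_h):=\sum_{k=0}^{n-1}b_{n,k}\,a(V_hu_k,\chi_h)-\int_0^{t_n}k(t_n-s)\,a(V_hu(s),\chi_h)\,ds.
\]
When you test with $\chi_h=\delta_\tau\theta_n$, you cannot bound this by $\|R_n^1(u)\|\,\|\delta_\tau\theta_n\|$: the gradient in $a(\cdot,\cdot)$ would force an $\check H^1$-norm on $\delta_\tau\theta_n$, which the energy identity does not supply (it controls $\|\delta_\tau\theta_n\|$ and $\|\theta_n\|_{\check H^1}$, not $\|\delta_\tau\theta_n\|_{\check H^1}$). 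The paper resolves this by introducing the discrete elliptic operator $\mathcal L_h$ via $(\mathcal L_h\phi,\chi_h)=a(\phi,\chi_h)$, rewriting $G_n^1(\delta_\tau\theta_n)=(R_n^1(\mathcal L_hV_hu),\delta_\tau\theta_n)$, and then applying Lemma~\ref{L1} with $\psi=\mathcal L_hV_hu$. This costs $\|u_t\|_{L^1(\check H^2)}$ rather than $\|u_t\|_{L^1(L^2)}$, which is why the higher regularity from Theorem~\ref{thm1:pde} (and ultimately the hypotheses $u_0\in\check H^4$, $\hat u_0\in\check H^3$, $f\in H^2(\check H^2)\cap L^2(\check H^3)$) is genuinely needed. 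With this correction your argument coincides with the paper's.
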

\begin{proof}
Denote the total error by
$$\ds U_n-u_n=U_n-V_hu_n+V_hu_n-u_n=\theta_n+\rho_n.$$
Substracting (\ref{Euler:e2}) from (\ref{Euler:e3}) and considering the definition of $V_h$ in (\ref{Def:RV}), we get the following error equation
\begin{equation*}
\begin{split}
\ds (\delta_\tau\theta_n,\chi_h)+\tau a(\theta_n,\chi_h)
&=(\delta_\tau\theta_{n-1},\chi_h)-\tau\sum_{k=0}^{n-1}b_{n,k}a(\theta_k,\chi_h)\\
&\ds\quad -\tau G_n^1(\chi_h)-\tau(\delta_\tau^2\rho_n,\chi_h)+\tau(E_n^1,\chi_h),
\end{split}
\end{equation*}
where
\begin{equation*}
\ds G_n^1(\chi_h)= \sum_{k=0}^{n-1} b_{n,k}a(V_hu_k,\chi_h)-\int_0^{t_n}k(t_n-s)a(V_hu(s),\chi_h)ds.
\end{equation*}
We choose $\chi_h=\delta_\tau\theta_n$ to yield
\begin{equation*}
\begin{split}
\|\delta_\tau\theta_n\|^2+ a(\theta_n,\theta_n)=&(\delta_\tau\theta_{n-1},\delta_\tau\theta_n)+a(\theta_n,\theta_{n-1})-\sum_{k=0}^{n-1}b_{n,k}a(\theta_k,\theta_n-\theta_{n-1})\\
&+\tau(E_n^1,\delta_\tau\theta_n)-\tau G_n^1(\delta_{\tau}\theta_n)-\tau(\delta_\tau^2\rho_n,\delta_\tau\theta_n).
\end{split}
\end{equation*}
By applying the Cauchy inequality, we obtain
\begin{equation}\label{thm:Euler:e3}
\begin{split}
\|\delta_\tau\theta_n\|^2+ a(\theta_n,\theta_n)&
\le \|\delta_\tau\theta_{n-1}\|^2+a(\theta_{n-1},\theta_{n-1})
-2\sum_{k=0}^{n-1}b_{n,k}a(\theta_k,\theta_n-\theta_{n-1})\\
&\quad+2\tau(E_n^1,\delta_\tau\theta_n)-2\tau G_n^1(\delta_{\tau}\theta_n) -2\tau(\delta_\tau^2\rho_n,\delta_\tau\theta_n).
\end{split}
\end{equation}
Let $\|\delta_{\tau}\theta_{n^*}\|=\max_{1\le n\le N}\|\delta_{\tau}\theta_n\|$. Summing (\ref{thm:Euler:e3}) over $n=1,\cdots,n^*$ and again using the Cauchy inequality and Lemma \ref{lem:bnk}, we have
\begin{equation*}
\begin{split}
\|\delta_\tau\theta_{n^*}\|^2+a(\theta_{n^*},\theta_{n^*})
&\le 2\sum_{n=1}^{n^*}\sum_{k=0}^{n-1}|b_{n,k}a(\theta_k,\theta_n-\theta_{n-1})|
+2\tau\sum_{n=1}^{n^*}|G_n^1(\delta_{\tau}\theta_n)|\\
&\quad+2\tau\sum_{n=1}^{n^*}\|E_n^1\|\|\delta_\tau\theta_n\|
+2\tau\sum_{n=1}^{n^*}\|\delta_{\tau}^2\rho_n\| \|\delta_\tau\theta_n\|\\
&\le \sum_{n=1}^{n^*}\sum_{k=0}^{n-1}|b_{n,k}|\left( \frac{K}{4Q_0} \|\theta_n-\theta_{n-1}\|_1^2+\frac{4Q_0}{K}\|\theta_k\|_1^2\right)\\
&\quad\ds +2\tau\sum_{n=1}^{n^*}\left(|G_n^1(\delta_{\tau}\theta_n)|
+(\|E_n^1\|+\|\delta_{\tau}^2\rho_n\|)\|\delta_{\tau}\theta_n\|\right)\\
&\le \frac{K}{2} \|\theta_{n^*}\|_1^2+(\frac{4Q_0^2}{K}+K)\sum_{k=1}^{n^*-1}\|\theta_k\|_1^2\\
&\quad\ds +2\tau\sum_{n=1}^{n^*}\left(|G_n^1(\delta_{\tau}\theta_n)|
+(\|E_n^1\|+\|\delta_{\tau}^2\rho_n\|)\|\delta_{\tau}\theta_n\|\right),
\end{split}
\end{equation*}
where we used $(a-b)^2\leq 2(a^2+b^2)$ and
\begin{align*}
   \sum_{n=1}^{n^*}\sum_{k=0}^{n-1}|b_{n,k}| \|\theta_k\|_1^2 & = \sum_{k=0}^{n^*-1} \|\theta_k\|_1^2 \sum_{n=k+1}^{n^*}|b_{n,k}| \leq Q_0\sum_{k=0}^{n^*-1} \|\theta_k\|_1^2 = Q_0\sum_{k=1}^{n^*-1} \|\theta_k\|_1^2.
\end{align*}
By cancelling the term $a(\theta_{n^*},\theta_{n^*})$, and applying the discrete Gronwall inequlity, we arrive at
\begin{equation}\label{thm:Euler:e5}
\begin{split}
\|\delta_{\tau}\theta_{n^*}\|^2
&\le 2\tau\sum_{n=1}^{n^*}|G_n^1(\delta_{\tau}\theta_n)|+2\tau \sum_{n=1}^{n^*} \left(\|E_n^1\|+\|\delta_{\tau}^2 \rho_n\|\right)\|\delta_\tau \theta_n\|.
\end{split}
\end{equation}
Defining the discrete elliptic operator projection $\mathcal L_h$ and the  $L^2-$ orthogonal projection $\mathcal P_h$ by
\begin{equation*}\begin{split}
(\mathcal L_h\phi,\chi_h)=a(\phi,\chi_h),\quad (\mathcal P_h\phi,\chi_h)=(\phi,\chi_h).
\end{split}
\end{equation*}
Then $\mathcal L_h=\mathcal P_h\mathcal L$. Using Lemma \ref{L1}, $G_n^1(\delta_{\tau}\theta_n)$ can be bounded by
\begin{equation}\label{est:Gn1}\begin{split}
\sum_{n=1}^{n^*}|G_n^1(\delta_{\tau}\theta_n)|&\ds=\sum_{n=1}^{n^*}\left|\sum_{k=0}^{n-1}b_{n,k}(\mathcal L_hV_hu_k,\delta_{\tau}\theta_n)
-\int_{0}^{t_n}k(t_n-s)(\mathcal L_hV_hu(s),\delta_{\tau}\theta_n)ds\right|\\
&=\sum_{n=1}^{n^*}\left|(R_n^1(\mathcal L_hV_hu),\delta_{\tau}\theta_n)\right|
\le \sum_{n=1}^{n^*}\|R_n^1(\mathcal L_hV_hu)\| \|\delta_{\tau}\theta_n\|.
\end{split}
\end{equation}
Inserting (\ref{est:Gn1}) into (\ref{thm:Euler:e5}),
 and using Lemma \ref{lemma:loc1}, we get
\begin{equation*}
\begin{split}
\|\delta_{\tau}\theta_{n^*}\|\le Q\tau\sum_{n=1}^{n^*}\left(\|R_n^1(\mathcal L_hV_hu)\|+\|E_n^1\|
+\|\delta_{\tau}^2\rho_n\|\right)\le Q(\tau+h^2),
\end{split}
\end{equation*}
where  Lemma \ref{thm:Vh} is employed to estimate the first term on the right-hand side
\begin{align*}
\sum_{n=1}^{n^*}\|R_n^1(\mathcal L_hV_hu)\| &\leq Q\|(\mathcal L_hV_hu)_t\|_{L^1(L^2)}
 = Q\|\mathcal P_h\mathcal L(V_hu)_t\|_{L^1(L^2)} \\
&\leq Q\|\mathcal L(V_hu_t)\|_{L^1(L^2)}
\leq Q\|u_t\|_{L^1(\check H^2)}.
\end{align*}
Finally, noting that $\|\theta_n\|\le \|\theta_{n-1}\|+\tau\|\delta_{\tau}\theta_n\|$, we have
\begin{equation*}
\|\theta_{n^*}\|\le \|\theta_{0}\|+\tau\sum_{k=1}^{n}\|\delta_\tau\theta_{k}\|\le Q(\tau+h^2).
\end{equation*}
Combining with Lemma \ref{thm:RV1}, we complete the proof.
\end{proof}

\subsection{A fast algorithm}
Let $\{\varphi_j(\bm{x})\}_{j=1}^{M}$ be the nodal basis functions of $S_h$, satisfying $\varphi_j(\bm{x}_j)=1$ and $\varphi_j(\bm{x}_i)=0$ for $i\ne j$, where $M$ denotes the number of degrees of freedom of the finite element space. Denote $\mathbf{M},\mathbf{S}\in R^{M\times M}$ be the mass and stiffness matrices, respectively. Let $U_i^n(1\le i \le M,1\le n \le N)$ be the finite element approximation of (\ref{Euler:e3}) at $(\bm x_i,t_n)$. Define $\mathbf U^{n}=[U_1^n,U_2^n,\cdots,U_M^n]^{\top}$, then the matrix formulation of (\ref{Euler:e3}) is given by
\begin{equation}\label{fMat:e1}
(\mathbf{M}+\tau^2\mathbf{S})\mathbf{U}^n=2\mathbf{M}\mathbf{U}^{n-1}-\mathbf{M}\mathbf{U}^{n-2}-\tau^2\sum_{k=0}^{n-1}b_{n,k}\mathbf{S}\mathbf{U}^k+\tau^2\mathbf{F}^n,\quad n\geq 2,
\end{equation}
where $\mathbf{F}^n=[F_1^n,F_2^n,\cdots,F_M^n]$ with $\ds F_i^n=\int_{\Omega}f(\bm{x},t_n)\varphi_i(\bm{x})d\bm{x}$.  {The initial step $\mathbf U^1$ can be computed by the forward Euler discretization, $\mathbf M\mathbf U^1=\mathbf M\mathbf U^0+\tau \hat{\mathbf  U}^0$, where $\ds\hat{\mathbf{U}}^0_i=\int_{\Omega}\hat u_0\phi_i(\bm x)d\bm x$.}
At each time step, evaluating  $\mathbf{S}\mathbf{U}^k$ requires $O(M)$ for $1\le k \le n-1$ operations, thus the cost of forming the right-hand side of (\ref{fMat:e1}) is $O(nM)$.  Summing over all time levels yields
\begin{equation*}
\ds \sum_{n=1}^NO(Mn)=O(MN^2).
\end{equation*}
Therefore, the overall computational complexity of the time-stepping scheme (\ref{fMat:e1}) is $O(MN^2)$.

To develop a fast algorithm, we denote $$
\begin{array}{l}
\mathbf{U}=[\mathbf{U}^{2,\top},\mathbf{U}^{3,\top},\cdots,\mathbf{U}^{N,\top}]^\top,\\[0.1in]
\mathbf{F}=[\mathbf{F}^{2,\top},\mathbf{F}^{3,\top},\cdots,\mathbf{F}^{N,\top}]^\top,
\end{array}
$$
and reformulate the scheme by solving an all-at-once linear system, instead of updating $(\ref{fMat:e1})$ step by step for $2\le n\le N$:
\begin{equation}\label{fMat:e2}
(\mathbf{E}_{N-1}\otimes \mathbf{M}+\tau^2\mathbf{T}_{N-1}\otimes \mathbf{S})\mathbf{U}=\tau^2\mathbf{F},
\end{equation}
where $\mathbf{E}_{N-1}$ is a banded lower-triangular matrix with bandwidth $3$, whose nonzero entries on the main diagonal and the two lower diagonals are $[1,-2,1]$.
 Moreover, $\bm{T}_{N-1}={\text{toeplitz}}(\bm{t}^c,\bm{t}^r)$ is a $(N-1)\times(N-1)$ sub-triangular Toeplitz matrix with its first column $\bm{t}^c$ defined by
\begin{equation*}
\ds \mathbf{t}_k^c=\left\{ \begin{array}{ll}
	   1,&k=1,\\
\ds \frac{(\tau(k-1))^{-\alpha(\tau(k-1))}}{\Gamma(1-\alpha(\tau(k-1)))}-\frac{(\tau(k-2))^{-\alpha(\tau(k-2))}}{\Gamma(1-\alpha(\tau(k-2)))},&2\le k\le N-1.
\end{array}\right.
\end{equation*}
\begin{theorem}
All-at-once linear system (\ref{fMat:e2}) can be solved in $O(MN\ln^2N)$ operations using a fast divide-and-conquer algorithm.
\end{theorem}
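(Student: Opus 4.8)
The plan is to exploit the block lower-triangular, block-Toeplitz structure of the coefficient matrix in \eqref{fMat:e2}. First I would record the structural observation: since both $\mathbf{E}_{N-1}$ and $\mathbf{T}_{N-1}$ are lower-triangular with unit diagonal, the matrix $\mathbf{E}_{N-1}\otimes\mathbf{M}+\tau^2\mathbf{T}_{N-1}\otimes\mathbf{S}$ is block lower-triangular in the time index $n$, with every diagonal block equal to $\mathbf{A}:=\mathbf{M}+\tau^2\mathbf{S}$. The $(n,k)$ off-diagonal block equals $(\mathbf{E}_{N-1})_{n,k}\mathbf{M}+\tau^2(\mathbf{T}_{N-1})_{n,k}\mathbf{S}$, and because $b_{n,k}$ depends only on $t_n-t_k=(n-k)\tau$, both scalar factors depend only on the difference $n-k$. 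Hence the coupling from any block of time levels to later ones is a genuine block-Toeplitz operator, i.e.\ a discrete convolution in time. A naive block forward substitution costs $O(MN^2)$; the task is to replace the convolutions by FFT.

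Then I would set up the divide-and-conquer recursion on a contiguous range of time levels $[\ell,r]$ of length $L$. Split at the midpoint $m=\lfloor(\ell+r)/2\rfloor$, solve the left subsystem on $[\ell,m]$ recursively, then use the now-known $\{\mathbf{U}^k\}_{k=\ell}^{m}$ to update the right-hand sides of the block rows $n\in[m+1,r]$, and finally solve the right subsystem on $[m+1,r]$ recursively. The coupling update requires, for each $n\in[m+1,r]$, the quantity $\sum_{k=\ell}^{m}(\mathbf{T}_{N-1})_{n,k}\mathbf{S}\mathbf{U}^k$ (plus the banded $\mathbf{E}$-contribution, which touches only adjacent blocks and therefore costs $O(M)$ per row). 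I would first form $\mathbf{V}^k:=\mathbf{S}\mathbf{U}^k$ for $k\in[\ell,m]$ using the sparsity of $\mathbf{S}$ at cost $O(M)$ each, then recognize $\sum_k t_{n-k}\mathbf{V}^k$ as a discrete convolution of the Toeplitz symbol $\{t_j\}$ with the sequence $\{\mathbf{V}^k\}$. Carrying out this convolution componentwise over the $M$ spatial degrees of freedom by FFT costs $O(ML\log L)$.

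For the complexity, let $C(L)$ denote the cost of solving a block of $L$ time levels. The two recursive calls plus the coupling update give the recurrence $C(L)=2C(L/2)+O(ML\log L)$, whose solution is $C(L)=O(ML\log^2 L)$; with $L=N-1$ this yields the claimed $O(MN\log^2 N)$. At the base case $L=1$ a single diagonal solve $\mathbf{A}\mathbf{U}^n=\text{(rhs)}$ is required; precomputing one sparse Cholesky factorization of the symmetric positive-definite matrix $\mathbf{A}$ once, outside the recursion, makes each such solve $O(M)$, so the aggregate base-case cost is $O(MN)$ and does not dominate.

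The step I expect to be the main obstacle---and the one deserving the most care---is the rigorous accounting of the coupling update as a batched convolution: one must verify that the translational invariance $b_{n,k}=b_{n-k}$ genuinely reduces the cross-block interaction to a single Toeplitz symbol shared by all $M$ spatial components, so that a fixed-length FFT can be applied componentwise without any $n$-dependent recomputation of the kernel, and that the banded $\mathbf{E}_{N-1}$ term contributes only a lower-order $O(ML)$ cost. Subject to this, the recurrence and hence the overall $O(MN\log^2 N)$ bound follow.
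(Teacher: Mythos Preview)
Your proposal is correct and follows essentially the same divide-and-conquer strategy as the paper: recursively halve the time-index range, use FFT on the block-Toeplitz coupling to update the right half's right-hand side in $O(ML\log L)$ operations, and obtain the recurrence $C(L)=2C(L/2)+O(ML\log L)$ with solution $O(MN\log^2 N)$. The paper's own proof accounts only for the right-hand side updates and does not discuss the base-case diagonal solves at all, so your Cholesky remark goes slightly beyond what the paper provides (though the $O(M)$-per-solve claim holds as stated only for $d=1$).
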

\begin{proof}
We partition $\mathbf{E}_{N-1}$ and $\mathbf{T}_{N-1}$ equally into $2\times2$ block matrices:
\begin{equation}\label{fMat:e4}
\mathbf{E}_{N-1}=
\begin{bmatrix}
\mathbf{E}_{(N-1)/{2}} & 0\\
\mathbf{H}_{(N-1)/2} & \mathbf{E}_{(N-1)/2}
\end{bmatrix},\quad
\mathbf{T}_{N-1}=
\begin{bmatrix}
\mathbf{T}_{{(N-1)}/{2}} & 0\\
\mathbf {L}_{{(N-1)}/{2}} & \mathbf{T}_{{(N-1)}/{2}}
\end{bmatrix},
\end{equation}
where the top-right block of  $\mathbf H_{(N-1)/2}$ is $\begin{bmatrix} 1& -2\\
0& 1\end{bmatrix}$, and all the other entries are zero. The matrix $\mathbf L_{(N-1)/2}=\operatorname{toeplitz}(\mathbf l^c,\mathbf l^r)$ is a $(N-1)/2\times (N-1)/2$ Toeplitz matrix with first row and column given by $\mathbf l_c=\mathbf t^c((N-1)/2+1:N-1), \mathbf l_r=\operatorname{flipud}(\mathbf t^c(2:(N-1)/2+1)) $.
Accordingly, we set $\mathbf{U}=[\mathbf{U}_1^{\top},\mathbf{U}_2^{\top}]^\top$ and $\mathbf{F}=[\mathbf{F}_1^{\top},\mathbf{F}_2^{\top}]^\top$. Then (\ref{fMat:e2}) can be equivalently solved by
\begin{equation}\label{fMat:e5}
\left\{
\begin{array}{l}
(\mathbf{E}_{{(N-1)}/{2}}\otimes\mathbf{M}+\tau^2\mathbf{T}_{{(N-1)}/2}\otimes \mathbf{S})\mathbf{U_1}=\tau^2\mathbf{F_1}\\[0.1in]
(\mathbf{E}_{{(N-1)}/2}\otimes\mathbf{M}+\tau^2\mathbf{T}_{{(N-1)}/{2}}\otimes \mathbf{S})\mathbf{U_2}\\[0.1in]
\quad =\tau^2\mathbf{F_2}-(\mathbf{H}_{{(N-1)}/{2}}\otimes\mathbf{M}+\tau^2\mathbf{L}_{{(N-1)}/2}\otimes \mathbf{S})\mathbf{U_1}.
\end{array}\right.
\end{equation}
From the structure of $\mathbf H$ in (\ref{fMat:e4}), the product $(\mathbf H_{(N-1)/2}\otimes \mathbf M)\mathbf U_1$ requires only $O(MN)$ operations.
Let $\mathbf{X}$ be the matrix form of $\mathbf{U}_1$ which is obtained by reshaping $\mathbf U_1$ into a $M\times{(N-1)}/{2}$ matrix, where the $i$-th row corresponds to the unknowns at $\bm x_i$. With $vec(\cdot)$ denoting the inverse reshaping, the third term on the right-hand side of (\ref{fMat:e5}) can be computed by
\begin{equation*}
(\mathbf L_{(N-1)/2}\otimes \mathbf S)\mathbf{U_1}=vec(\mathbf{S}\mathbf{X}\mathbf L_{{(N-1)}/{2}}^\top)=vec(\mathbf{S}(\mathbf {L}_{{(N-1)}/{2}}\mathbf{X}^\top)^\top).
\end{equation*}
By applying the Fast Fourier Transform, the multiplication $\mathbf L_{(N-1)/{2}}\mathbf X^{\top}$ can be carried out in $O(MN\ln N)$ operations. Consequently, $(\mathbf L_{(N-1)/{2}}\otimes \mathbf S)\mathbf{U}_1$ can be computed in $ O(MNlnN)$ operations. Since the sub-matrices $\mathbf{E}_{(N-1)/{2}}$ and $\mathbf{T}_{(N-1)/{2}}$ share the same structure as $\mathbf E_{N-1}$ and $\mathbf {T}_{N-1}$, we can recursively apply the above partitioning, {which leads} to a {divide-and-conquer} algorithm. With $2^J=N-1$, the total computational complexity of the right-hand side terms is
\begin{equation*}
\ds O(MN\ln N)+2O(\frac{MN}{2}\ln \frac{N}{2})+\cdots+2^JO(\frac{MN}{2^J}\ln \frac{N}{2^J})=O(MN\ln^2N).
\end{equation*}
This {completes} the proof.
\end{proof}

\section{Numerical experiments}\label{secnum}
Throughout this section, we set the final time $T = 1$, $\Omega = (0,1)^d$ with $d = 1$ or $2$
and $K = 0.01$.

\textbf{Example 1}
The exact solution is given by $u(x,t) = t^3 \sin(2\pi x)$  with $\alpha(t) = 1 - \cos(t)$.
The source term is
\begin{equation}\label{f}
\ds f(x,t) = 6t \sin(2\pi x) + 4K\pi^2 \sin(2\pi x)\left(t^3 + \int_{0}^{t} k(s)(t-s)^3\,ds\right),
\end{equation}
in which the convolution term is approximated by the composite Simpson's rule.
We evaluate the $L^2$ error of the numerical solution at $t= T$, and
fix $\displaystyle h = 2^{-7}$ and $\tau=2^{-13}$ to examine the temporal and spatial convergence rates, respectively. The numerical results are presented in Tables~\ref{tab:t1}--\ref{tab:t11},
which demonstrate the first-order temporal accuracy and second-order spatial accuracy of scheme~(\ref{Euler:e3}), in agreement with Theorem~\ref{thm:Euler}.

\begin{table}[H]
\centering
\begin{tabular}{|c|c c|c c|}
\hline
$\tau$ & TSS & $\mathrm{Rate}$ & FDAC & $\mathrm{Rate}$ \\ \hline
$2^{-5}$ & 5.9344e-02 &-- & 5.9344e-02 & --\\ \hline
$2^{-6}$ & 3.0721e-02 & 0.97 & 3.0721e-02 & 0.97 \\ \hline
$2^{-7}$ & 1.5577e-02 & 0.99 & 1.5577e-02 & 0.99 \\ \hline
$2^{-8}$ & 7.7875e-03 & 1.01 & 7.7875e-03 & 1.01 \\ \hline
\end{tabular}
\caption{$L^2$ errors and temporal convergence rates of (\ref{fMat:e1}) and (\ref{fMat:e2}) for Example 1.}
\label{tab:t1}
\end{table}

\begin{table}[H]
\centering
\begin{tabular}{|c|c c|c c|}
\hline
$h$ & TSS & $\mathrm{Rate}$ & FDAC & $\mathrm{Rate}$ \\ \hline
$2^{-3}$ & 1.7477e-02 &-- & 1.7477e-02 &-- \\ \hline
$2^{-4}$ & 4.1395e-03 & 2.08 & 4.1395e-03 & 2.08 \\ \hline
$2^{-5}$ & 1.0222e-03 & 2.02 & 1.0222e-03 & 2.02 \\ \hline
$2^{-6}$ & 2.7006e-04 & 1.92 & 2.7006e-04 & 1.92 \\ \hline
\end{tabular}
\caption{$L^2$ errors and spatial convergence rates of (\ref{fMat:e1}) and (\ref{fMat:e2}) for Example 1.}
\label{tab:t11}
\end{table}


In addition, we  compare the CPU time (denoted as $\mathrm{CPU_{FDAC}}$) required to solve (\ref{fMat:e2})
using the FDAC algorithm with that (denoted as $\mathrm{CPU_{TSS}}$) for solving (\ref{fMat:e1})
via the traditional time-stepping scheme (TSS) under $\displaystyle h=2^{-3}$.
The comparison results are reported in Table~\ref{tab:t2} and Figure~\ref{fig:g11} (left), which clearly show that the FDAC algorithm achieves a much higher computational efficiency.

\begin{table}[H]
\centering
\begin{tabular}{|c|ccccccccccc|}
\hline
$N$ & $2^8$ & $2^9$ & $2^{10}$ & $2^{11}$ & $2^{12}$ & $2^{13}$ & $2^{14}$ & $2^{15}$ & $2^{16}$ & $2^{17}$ & $2^{18}$ \\ \hline
$\mathrm{CPU_{TSS}}$   & 0.8 & 3.0 & 12 & 46 & 202 & 787 & 3251 & 14187 & - & - & - \\ \hline
$\mathrm{CPU_{FDAC}}$  & 0.1 & 0.3 & 0.6 & 1.1 & 2.2 & 4.3 & 8.8 & 19 & 61 & 125 & 248 \\ \hline
\end{tabular}
\caption{CPU times (in seconds) for TSS and FDAC in Example 1.}
\label{tab:t2}
\end{table}

\begin{figure}[H]
\centering
\includegraphics[width=6.5cm]{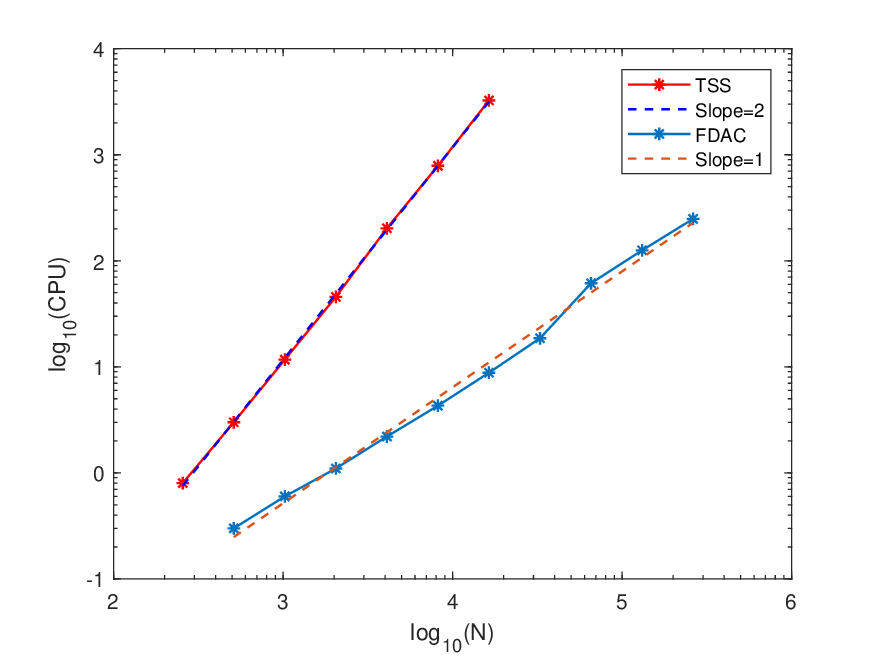}
\includegraphics[width=6.5cm]{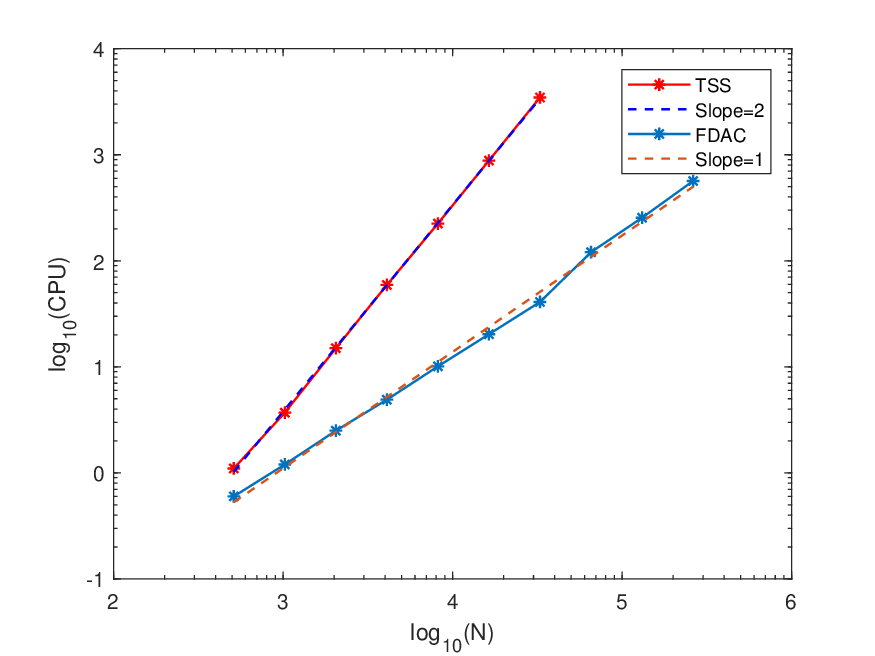}
\caption{CPU times of TSS and FDAC for Example 1 (left) and Example 2 (right) under a log-log scale.}
\label{fig:g11}
\end{figure}

\textbf{Example 2}
Consider the exact solution $u(x,y,t)=t^3 \sin(2\pi x) \sin(2\pi y)$ with $\alpha(t)=t \sin(t)$. We accordingly evaluate the source term as follows
\begin{equation*}
\ds f = 6t \sin(2\pi x)\sin(2\pi y)  + 4K\pi^2 \sin(2\pi x) \sin(2\pi y)\left(t^3 + \int_{0}^{t} k(s)(t-s)^3\,ds\right),
\end{equation*}
and adopt the composite Simpson's rule to approximate the integral term in the above equation.
We fix $\displaystyle h = 2^{-7}$  to examine the temporal convergence rates and
fix $\tau = 2^{-12}$  to test the temporal convergence rates, in which we follow  Example 1 to compute the errors.
The numerical results
presented in Tables ~\ref{tab:t4}--\ref{tab:t41}, which again show the first-order temporal accuracy and second-order spatial accuracy of scheme~(\ref{Euler:e3}). These results substantiate the theoretical findings in Theorem \ref{thm:Euler}.

\begin{table}[]
		\centering
		\begin{tabular}{|c|cc|cc|}
			\hline
			$\tau$ & TSS & $\mathrm{Rate}$ & FDAC & $\mathrm{Rate}$\\ \hline
			$2^{-5}$ & 4.0186e-02 & --  & 4.0186e-02 & -- \\\hline
			$2^{-6}$ & 2.0704e-02 & 0.98  & 2.0704e-02 & 0.98 \\\hline
			$2^{-7}$ & 1.0376e-02 & 1.01  & 1.0376e-02 & 1.01 \\\hline
			$2^{-8}$ & 5.0587e-03 & 1.04  & 5.0587e-03 & 1.04 \\\hline
		\end{tabular}
		\caption{$L^2$ errors and temporal convergence rates of TSS and FDAC for Example 2.}
		\label{tab:t4}
	\end{table}

	\begin{table}[]
		\centering
		\begin{tabular}{|c|cc|cc|}
			\hline
			$ h $ & TSS & $\mathrm{Rate}$ & FDAC & $\mathrm{Rate}$\\ \hline
			$2^{-3}$ & 3.3078e-02 & --  & 3.3078e-02 & -- \\\hline
			$2^{-4}$ & 7.2671e-03 & 2.19  & 7.2671e-03 & 2.19 \\\hline
			$2^{-5}$ & 1.7473e-03 & 2.06  & 1.7473e-03 & 2.06 \\\hline
			$2^{-6}$ & 4.3325e-04 & 2.01  & 4.3325e-04 & 2.01 \\\hline
		\end{tabular}
		\caption{$L^2$ errors and spatial convergence rates of TSS and FDAC for Example 2.}
		\label{tab:t41}
	\end{table}


Furthermore, we compare the CPU time $\mathrm{CPU_{FDAC}}$ for solving (\ref{fMat:e2})
using the FDAC method with $\mathrm{CPU_{TSS}}$ for solving (\ref{fMat:e1}) via the traditional TSS
 under $\displaystyle h=2^{-3}$.
The results in Table~\ref{tab:t5} and Figure~\ref{fig:g11} (right) clearly demonstrate that the FDAC algorithm is significantly more efficient than TSS.

	\begin{table}[H]
		\centering
		\begin{tabular}{|c|c c c c c c c c c c c|}\hline
			$N$ & $2^8$ & $2^9$ & $2^{10}$ & $2^{11}$ & $2^{12}$ & $2^{13}$ & $2^{14}$ & $2^{15}$ & $2^{16}$ & $2^{17}$ & $2^{18}$\\ \hline
			$\mathrm{CPU_{TSS}}$ & 0.3 & 1.1 & 3.7 & 15 & 59  & 224   & 882 & 3467 & 14928 & - & -\\\hline
			$\mathrm{CPU_{FDAC}}$ & 0.3 & 0.6 & 1.2 & 2.5 & 4.9 & 10 & 20 & 41 & 121 & 255 & 566\\\hline
		\end{tabular}
		\caption{CPU times (in seconds) of TSS and FDAC for Example 2.}
		\label{tab:t5}
	\end{table}

\textbf{Example 3}  We set $\alpha(t) = 1 - \cos(t)$, $f\equiv 1$ and $u_0=\hat{u}_0=\sin(\pi x)\sin(\pi y)$.
We fix $h=2^{-5}$ to test the temporal convergence rate while fix $\tau=2^{-5}$ to test its spatial convergence rate.
The temporal errors and its convergence rate are defined as follows
\begin{equation*}\label{temp}\begin{array}{l}
\ds E(\tau,h)=\sqrt{h\sum_{j=1}^{M-1}\bigg|U^j_N(\tau,h)-U^j_{2N}({\tau}/{2},h)\bigg|^2},
 \quad rate^t=\log_2\bigg(\frac{E(2\tau,h)}{E(\tau,h)}\bigg).
\end{array}
\end{equation*}
Similarly, we denote spatial errors and the corresponding convergence rate as follows
\begin{equation*}\label{spat}
G(\tau,h)=\sqrt{h\sum_{j=1}^{M-1}\bigg|U^j_N(\tau,h)-U^{2j}_{N}(\tau,{h}/{2})\bigg|^2},
\quad rate^x=\log_2\bigg(\frac{G(\tau,2h)}{G(\tau,h)}\bigg).
\end{equation*}
	\begin{figure}
			\centering
	\includegraphics[width=5cm]{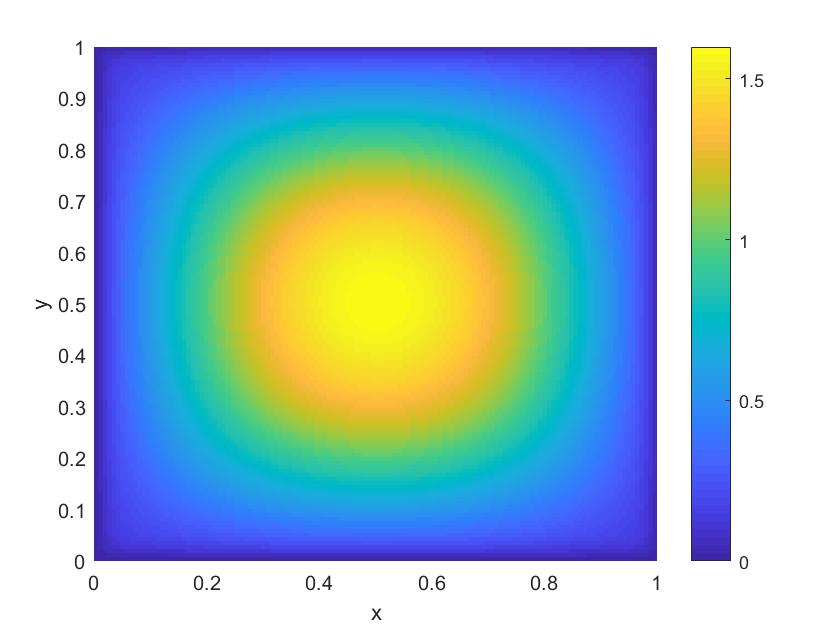}
\includegraphics[width=5cm]{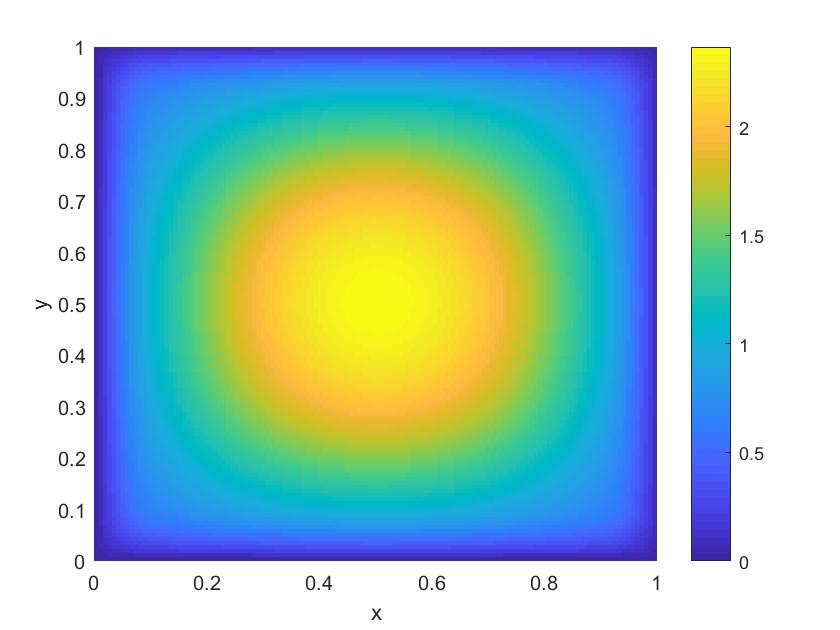}
			\caption{FDAC solutions for Example 3 at $\ds t={T}/2$ (left) and $t=T$ (right).}
			\label{fig:g}
		\end{figure}

We plot the solutions of the FDAC algorithm with  $\tau=2^{-12}$ and $h=2^{-8}$ in Figure \ref{fig:g}, and present the
  numerical results
in Tables \ref{tab:t}--\ref{tab:t1}, which clearly demonstrate the first-order temporal accuracy and and second-order spatial accuracy of scheme~(\ref{Euler:e3}). These results are  consistent with theoretical findings in Theorem~\ref{thm:Euler}.
		\begin{table}[H]
			\centering
			\begin{tabular}{|c|c c|c c|}
				\hline
				$\tau$ & TSS & $rate^{t}$ & FDAC & $rate^{t}$ \\ \hline
				$2^{-5}$ & 1.4291e-02 & -    & 1.4291e-02 & - \\ \hline
				$2^{-6}$ & 7.6061e-03 & 0.91 & 7.6061e-03 & 0.91 \\ \hline
				$2^{-7}$ & 3.9321e-03 & 0.95 & 3.9321e-03 & 0.95 \\ \hline
				$2^{-8}$ & 2.0010e-03 & 0.97 & 2.0010e-03 & 0.97 \\ \hline
			\end{tabular}
			\caption{Errors and temporal convergence rates for Example 3.}
			\label{tab:t}
		\end{table}
		\begin{table}[H]
			\centering
			\begin{tabular}{|c|c c|c c|}
				\hline
				$h$ & TSS & $rate^x$ & FDAC & $rate^x$ \\ \hline
				$2^{-5}$ & 4.0253e-03 & -    & 4.0253e-03 & -    \\ \hline
				$2^{-6}$ & 1.1337e-03 & 1.83 & 1.1337e-03 & 1.83 \\ \hline
				$2^{-7}$ & 3.0186e-04 & 1.91 & 3.0186e-04 & 1.91 \\ \hline
				$2^{-8}$ & 7.6436e-05 & 1.98 & 7.6436e-05 & 1.98 \\ \hline
			\end{tabular}
			\caption{Errors and spatial  convergence rates for Example 3.}
			\label{tab:t1}
		\end{table}

\section*{Acknowledgments}
This work was partially supported by the Postdoctoral Fellowship Program of CPSF (No. GZC20240938), the China Postdoctoral Science Foundation (No. 2024M762459), and the  Natural Science Foundation of Hubei Province (No. 2025AFB109).


\begin{thebibliography}{10}


\bibitem{AdaFou} R.~Adams and J.~Fournier, {\it Sobolev Spaces}, Elsevier, San Diego, 2003.

\bibitem{CanLin} J. Cannon and Y. Lin, Smooth solutions for an integro-differential equation of parabolic type. {\it Differ. Integral. Equ.} 2 (1989),  111--121.

\bibitem{CheThoWah} C. Chen, V.  Thom\'ee, and L. Wahlbin, Finite element approximation of a parabolic integro-differential equation with a weakly singular kernel. {\it Math.  Comput.} 38 (1992), 587--602.

\bibitem{DuSun} R. Du, Z. Sun, and H. Wang,  Temporal second-order finite difference schemes for variable-order time fractional wave equations. {\it SIAM J. Numer. Anal.} 60  (2022), 104--132.




\bibitem{Eva} L. Evans, {\it Partial differential equations.} Graduate Studies in Mathematics, Vol. 19, American Mathematical Society, Rhode Island, 1998.

\bibitem{FanSunWan}Z. Fang, H. Sun, and H. Wang, A fast method for variable-order Caputo fractional derivative with applications to time-fractional diffusion equations. {\it Comput.
Math. Appl.} 80 (2020), 1443--1458.


\bibitem{HuCai} D. Hu, W. Cai, Y. Fu, and Y. Wang, Fast dissipation-preserving difference scheme for nonlinear generalized wave equations with the integral fractional Laplacian. {\it Commun. Nonlinear Sci. Numer. Simul.}  99  (2021), 105786.

\bibitem{JiaWan} J. Jia, H. Wang, Z. Zhou, and X. Zheng, A fast sequentially-decoupled matrix-decomposed algorithm for variable-order time-fractional optimal control and error estimate. {\it J. Comput. Appl. Math.} 496 (2025), 116667.

\bibitem{JinLaz} B. Jin, R. Lazarov, and Z. Zhou, Two fully discrete schemes for fractional diffusion and diffusion-wave equations with nonsmooth data. {\it SIAM J. Sci. Comput.} 38 (2016), A146–A170.

\bibitem{LiLuo} B. Li, H. Luo and X.  Xie,  A space-time finite element method for fractional wave problems. {\it Numer.
Algorithms} 85 (2020), 1095–1121.



\bibitem{LiWanJia} S. Li,  H. Wang, and J. Jia, Local modification and analysis of a variable-order fractional wave equation. {\it Appl. Math. Lett.}  163 (2025), 109425.

\bibitem{LinThoWal}Y. Lin, V. Thomee and L. Wahlbin, Ritz-Volterra projections to finite-element spaces and applications to integrodifferential and related equations. {\it SIAM J. Numer. Anal.} 28 (1991), 1047--1070.

\bibitem{LiuFu}J. Liu and H. Fu, An efficient QSC approximation of variable-order time-fractional mobile-immobile diffusion equations with variably diffusive coefficients. {\it J. Sci. Comput.} 93 (2022), 44.


\bibitem{LorHar} C. Lorenzo and T. Hartley, Variable order and distributed order fractional operators.
{\it Nonlinear Dynam.} 29 (2002), 57--98.

\bibitem{Luchko}Y. Luchko, Fractional wave equation and damped waves. {\it J. Math. Phys.} 54 (2013), 031505.

\bibitem {LyuVon} P. Lyu  and S. Vong, A symmetric fractional-order reduction method for direct nonuniform approximations of semilinear diffusion-wave equations, {\it J. Sci. Comput.} 93 (2022), 34.


\bibitem{Mainardi}F. Mainardi, \textit{Fractional Calculus and Waves in Linear Viscoelasticity: An Introduction to Mathematical Models}.
Imperial College Press, London, 2010.

\bibitem{MaiSpa} F. Mainardi and G. Spada, Creep, relaxation and viscosity properties for basic fractional models in rheology. {\it Eur. Phys. J. Spec. Top.} 193 (2011), 133--160.

\bibitem{Mcl} W. McLean, V. Thom\'ee, and L. Wahlbin,  Discretization with variable time steps of an evolution equation with a positive-type memory term. {\it J. Comput. Appl. Math.} 69 (1996), 49--69.

\bibitem{McLean}W. McLean and K. Mustapha, A second-order accurate numerical method for a fractional wave equation. {\it Numer. Math.} 105 (2007), 481--510.




\bibitem{OdzMal1} T. Odzijewicz, A. Malinowska, and D. Torres, Noether’s theorem for fractional variational problems of variable order. {\it Cent. Eur. J. Phys.} 11 (2013), 691--701.

\bibitem{OdzMal2} T. Odzijewicz, A. Malinowska, and D.  Torres, Fractional Variational Calculus of Variable Order, Advances in harmonic analysis and operator theory. in: Oper.
Theory Adv. Appl. 229. Basel: Birkh\"{a}user/Springer Basel AG. 291-301.





\bibitem{Pod}I. Podlubny, {\it Fractional differential equations.} Academic Press, San Diego, 1999.

\bibitem{QiuZhe}W. Qiu and X. Zheng, Numerical analysis of high-order methods for variable-exponent fractional diffusion-wave equation. {\it J. Sci. Comput.} 105 (2025),  53.








\bibitem{SakYam} K.~Sakamoto and M.~Yamamoto, Initial value/boundary value problems for fractional diffusion-wave equations and applications to some inverse problems. {\em J. Math. Anal. Appl.} 382 (2011), 426--447.








\bibitem{Tho} V.~Thom\'{e}e, {\em Galerkin Finite Element Methods for Parabolic Problems}, Lecture Notes in Mathematics 1054, Springer-Verlag, New York, 1984.


\bibitem{Hendy22}K. Van Bockstal, A.  Hendy, and M.  Zaky, Space-dependent variable-order time-fractional wave equation: Existence and uniqueness of its weak solution. {\it Quaest. Math.}, 46 (2022), 1695--1715.

\bibitem{Xu} D. Xu,  Application of the Crank-Nicolson time integrator to viscoelastic wave equations with boundary
feedback damping. {\it IMA J. Numer. Anal.} 42 (2022), 487--514.

\bibitem{YuaXie} H. Yuan and X. Xie, Analysis of a fast fully discrete finite element method for fractional viscoelastic wave propagation. {\it Commun. Nonlinear Sci. Numer. Simul.}  152 (2026),  109361.

\bibitem{Zhang}T. Zhang, The optimal order error estimates for finite element approximations to hyperbolic problems. {\it J. Comput. Math.} 23 (2005), 275--2884.

\bibitem{ZhaoSun} X. Zhao, Z. Sun, and G. Karniadakis, Second-order approximations for variable order fractional
derivatives: Algorithms and applications. {\it J. Comput. Phys.} 293 (2015), 184--200.

\bibitem{ZheLiQiu}X. Zheng, Y. Li, and W. Qiu, Local modification of subdiffusion by initial Fickian diffusion: Multiscale modeling, analysis and computation. {\it  Multiscale Model. Simul.} 22 (2024), 1534--1557.


\bibitem{Zheng} X. Zheng, Two methods addressing variable-exponent fractional initial and boundary value problems and Abel integral equation. {\it CSIAM Trans. Appl. Math.} 6 (2025), 666--710.

\bibitem{ZheCNSNS}X. Zheng and H. Wang, Analysis and discretization of a variable-order fractional wave equation. {\it Commun.
Nonlinear Sci. Numer. Simul.} 104  (2022), 106047.

\bibitem{ZheWanSICON}X. Zheng and H. Wang, A hidden-memory variable-order fractional optimal control model: analysis and approximation. {\it SIAM J. Control Optim.} 59 (2021), 1851--1880.

\bibitem{ZhuaLiu} P. Zhuang, F. Liu, V. Anh, and I.  Turner,  Numerical methods for the variable-order fractional advection diffusion equation with a nonlinear source term. {\it SIAM J. Numer. Anal.} 47 (2009), 1760--1781.





\end{thebibliography}
\end{document}